\newtheorem{thm}{Theorem}[section] 
\newtheorem{theorem}[thm]{Theorem}
\newtheorem{lem}[thm]{Lemma} 
\newtheorem{lemma}[thm]{Lemma} 
\newtheorem{prop}[thm]{Proposition}
\newtheorem{cor}[thm]{Corollary}
\newtheorem{definition}{Definition}
 \theoremstyle{remark}
\newtheorem*{remark*}{Remark} \newtheorem{remark}{Remark}
 \theoremstyle{definition}
\renewcommand{\leq}{\leqslant} \renewcommand{\le}{\leq}
\renewcommand{\geq}{\geqslant} \renewcommand{\ge}{\geq}
\numberwithin{equation}{section}
\DeclareMathOperator{\dist}{dist}
\DeclareMathOperator{\diam}{diam}
\def\({\left(} 
\def\){\right)} 
\def\r){\right)} 
\def\[{\left[}
\def\]{\right]} 
\def\<{\langle} 
\def\>{\rangle}
\def \EE {\mathbb{E}} 
\def \PP {\mathbb{P}} 
\def \RR {\mathbb{R}}
\def \pK {\mathcal{K}}
\def \CI {C_1} 
\def \CII {C_3} 
\def \CIII {C_3} 
\def \CIV {C_4} 
\def \CV {C_5} 
\def \CVI {C_6} 
\def \CVII {C_7}
\def \CVIII {C_8} 
\def \CIX {C_9} 
\def \CX {C_{10}} 
\def \CXI {C_{11}}
\def \CXII {C_{12}} 
\def \CXIII {C_{13}} 
\def \CXIV {C_{14}} 
\def \CXV {C_{2}}
\def \CXVI {C_{16}}
\def \CXVII {C_{15}}
\def \CXVIII {C_{17}}
\def \CXIX {C_{18}}
\def \CXX {C_{1}}
\def \kk {G}
\def \tp{\tilde{p}}
\def \tEE {\tilde{\mathbb{E}}} 
\def \tPP {\tilde{\mathbb{P}}}
\def \PP {\mathbb{P}}
\def \tG {\tilde G}
\def \tL {\tilde{\mathcal{L}}}
\def \tGDD {{\tilde G_D}}
\def \tP {\tilde P}
\def\deltaDD {\delta_D}
 \newcommand{\R}{\mathbb{R}}
\newcommand{\E}{\mathbb{E}}
\newcommand{\WUSC}[3]{\textrm{\rm WUSC}(#1,#2,#3)}
\newcommand{\WLSC}[3]{\textrm{\rm WLSC}(#1,#2,#3)}
\newcommand{\lC}{{\underline{c}}}
\newcommand{\uC}{{\overline{C}}}
\newcommand{\la}{{\underline{\alpha}}}
\newcommand{\ua}{{\overline{\alpha}}}
\newcommand{\lt}{{\underline{\theta}}}
\newcommand{\ut}{{\overline{\theta}}}
\newcommand{\CG}{\mathcal{G}}
\newcommand{\de}{\delta}
\newcommand{\dex}{\delta_x}
\newcommand{\dey}{\delta_y}
\newcommand{\dez}{\delta_z}
\title{
	Green function for gradient perturbation of  unimodal L\'evy processes in  the real line
	\footnotetext{
		\emph{
			2000 Mathematics Subject Classification:
		} 
		Primary 47A55, 60J75 Secondary 47G20, 60J35, 60J50. Key words and phrases: unimodal L\'evy process, Poisson kernel, Green function, $C^{1,1}$ domain.\\ 
	}
}
\author{
	T. Grzywny\thanks{The first author was supported by the statutory fund of the Department of Mathematics,
Faculty of Pure and Applied Mathematics, Wroc\l{}aw University of Science
and Technology.}, 
	T. Jakubowski \thanks{The second and third authors were partially supported by NCN grant 2015/18/E/ST1/00239}
		and 	
	G. \.{Z}urek\\
	Faculty of Pure and Applied Mathematics\\Wroc\l{}aw University of Science and Technology, Poland
 }
\date{}
\begin{document}

\maketitle

\begin{abstract}
We prove that the Green function of a generator of symmetric unimodal L\'{e}vy processes with the weak lower scaling order
bigger than one and the Green function of its gradient perturbations are comparable
for bounded $C^{1,1}$ subsets of the real line if the drift function is from  an appropriate Kato class.
\end{abstract}

\section{Introduction}

Perturbations of Markovian generators are widely studied from many years. This theory may be considered from various points of view. Such perturbations appear, e.g., in local and non-local partial differential equations \cite{MR3238505, MR2680400, MR3060702, MR3017289}, semigroup theory \cite{MR2283957, MR3035054, MR2875353, MR3514392, MR2457489}, stochastic processes \cite{MR2369047, MR1310558, MR1310558}, potential theory \cite{MR2892584, MR3050510, MR3129851}. One of the natural question is: how this perturbation affects the solutions of the equations related to the unperturbed operator (e.g., the transition density of the semigroup, the Green function).

In this paper we are interested in the gradient perturbations and the potential theory of the perturbed operator. We briefly recall some results closely related to our research.  	Cranston and Zhao in \cite{1987-MC-ZZ-cmp} considered  the operator $\Delta +b(x) \nabla$ in $\RR^d$ for $d \ge 2$. They proved that the Green function and the harmonic measure of Lipschitz domains are comparable with those of $\Delta$ for the drift $b$ from the appropriate Kato class. In \cite{MR2353039} and \cite{MR2369047} Jakubowski studied the $\alpha$-stable Ornstein-Uhlenbeck process. He proved estimates for the first exit time from the ball and Harnack inequality for this process. In \cite{MR2892584} Bogdan and Jakubowski proved similar results as Cranson and Zhao for $\Delta^{\alpha/2} + b(x) \nabla$ in $C^{1,1}$ domains in $\RR^d$, $d \ge 2$. In the recent paper \cite{2017-TG-TJ-GZ-pms} these results were generalized to the case of pure-jump symmetric unimodal L\'evy processes possessing certain weak scaling properties. We note that in the papers \cite{MR2892584,2017-TG-TJ-GZ-pms} the case $d=1$ was omitted. The aim of this paper is to fill this gap and prove analogous results in one dimensional case. 

We will denote by $\{X_t\}$ a pure-jump symmetric  unimodal L\'{e}vy process on $\R$. That is, a process with the symmetric density function $p_t(x)$ on $\R \setminus \{0\}$ which is non-increasing on $\R_+$. The characteristic exponent of $\{X_t\}$ equals
$$
	\psi(x) = \int_{\R} \(1 -\cos ( x  z )\right) \nu(dz), \,\quad x\in\R.
$$
where $\nu$ is a L\'{e}vy measure, i.e., $\int_{\R}\(1\wedge|z|^2\right)\nu(dz)<\infty$. For general information on unimodal processes, we refer the reader to \cite{MR3165234, MR3225805, MR705619}. A primary example of the mentioned class of processes is the symmetric $\alpha$-stable L\'evy process having the fractional Laplacian $\Delta^{\alpha/2}$ as a generator.

Let
\begin{equation}\label{generator}
{\cal L} f(x)= \int_{\R} \left(f(x+z)-f(x)
-{\bf 1}_{|z|<1}(z\cdot \nabla f(x))\right)\nu(dz)\,, \quad f\in C^2_b(\R)\,,
\end{equation}
be a generator of the process $X_t$. We will consider a non-empty bounded open $C^{1,1}$ set $D$ and the Green function $G_D(x,y)$ for $\cal{L}$. Now, let $\tilde G_D(x,y)$ be a Green function for
$$
\tilde{\mathcal L} = \mathcal{L} + b(x) \cdot \nabla,
$$
where $b$ is a function from the Kato class  $\pK_1$ (see Section 2 for details). Our main result is

\begin{theorem}
\label{Theorem1}
Let $b\in \pK_1$, and  let $D\subset \R$ be 
an union of finitely many open intervals with positive distance between every two intervals. We assume that the characteristic exponent
$$
\psi\in \WLSC{\la}{0}{\lC}\cap \WLSC{\la_1}{1}{\lC_1} \cap \WUSC{\ua}{0}{\uC}, \qquad  \mbox{where }\la_1>1,
$$
Then, there exists a constant $C$ such that for $x,y \in D$,
  \begin{equation}
    \label{eq:egf}
C^{-1}G_D(x,y) \le \tilde G_D(x,y) \le C G_D(x,y)\,.
  \end{equation}
\end{theorem}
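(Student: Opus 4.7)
The plan is to realize $\tilde G_D$ as the Duhamel series
\begin{equation*}
  \tilde G_D(x,y) \;=\; \sum_{n=0}^\infty G_n(x,y),\qquad G_0 = G_D,\qquad G_{n+1}(x,y) = \int_D G_D(x,z)\,b(z)\,\nabla_z G_n(z,y)\,dz,
\end{equation*}
following the scheme of \cite{MR2892584, 2017-TG-TJ-GZ-pms}, and to show that the series converges to a function comparable with $G_D$.

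First, I would reduce to the case of a single interval component of $D$: since the components of $D$ sit at positive distance from each other, the process has to perform a ``large'' jump to travel between them, and this contribution is controlled by global heat-kernel estimates of $G$. For a single interval, the hypothesis $\psi\in\WLSC{\la}{0}{\lC}\cap\WUSC{\ua}{0}{\uC}$ delivers two-sided bounds for $G_D$ expressed through the renewal function $V$ of the ascending ladder-height subordinator of $X$ and $\psi$ itself, in the spirit of the estimates used in \cite{2017-TG-TJ-GZ-pms}.

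Second, I would prove a pointwise gradient bound of the form
\begin{equation*}
  |\nabla_y G_D(x,y)| \;\le\; \frac{C\,G_D(x,y)}{\delta(y)\wedge|x-y|},
\end{equation*}
obtained by differentiating $G_D(x,y)=\int_0^\infty p^D_t(x,y)\,dt$ under the integral sign and invoking the refined scaling $\WLSC{\la_1}{1}{\lC_1}$ with $\la_1>1$. The surplus $\la_1-1>0$ is precisely what makes $\nabla$ a subordinate perturbation of $\mathcal L$, playing the role that $\alpha>1$ plays in \cite{MR2892584}.

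The main obstacle is then the 3G-type estimate
\begin{equation*}
  \int_D G_D(x,z)\,|b(z)|\,|\nabla_z G_D(z,y)|\,dz \;\le\; \eta(b)\,G_D(x,y),
\end{equation*}
with $\eta(b)\to 0$ as $b$ is restricted to a set small in the sense of $\pK_1$. Unlike in \cite{MR2892584, 2017-TG-TJ-GZ-pms}, there is no dimensional factor $|x-z|^{d-1}$ available to absorb the singularities at $z=x$ and $z=y$, so the one-dimensional integration must lean on the surplus $\la_1-1$ throughout; this is where the proof genuinely departs from its higher-dimensional predecessors. Once this 3G estimate is in place, a Khasminski-type iteration yields $\tilde G_D \le C\,G_D$, and the reverse inequality follows from the dual Duhamel identity $G_D(x,y) = \tilde G_D(x,y) - \int_D \tilde G_D(x,z)\,b(z)\,\nabla_z G_D(z,y)\,dz$ combined with the upper bound just proved and the same 3G estimate.
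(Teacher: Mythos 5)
Your high-level architecture (perturbation/Duhamel identity, a 3G-type bound absorbed by the Kato condition, iteration) is the right skeleton and matches the paper's Section 5. But there is a concrete gap in the middle that the paper itself flags as its main technical contribution, and your plan walks straight into it.

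The gradient bound you propose, $|\nabla_y G_D(x,y)| \le C\, G_D(x,y)/(\delta(y)\wedge|x-y|)$, is true but \emph{not strong enough} in $d=1$. In one dimension with $\la_1>1$ the Green function does not blow up at the diagonal: for $z$ near $y$ with $y$ well inside $D$, $G_D(z,y)\approx V^2(\delta(y))/\delta(y)$ is essentially constant, so your bound degenerates to $|\partial_z G_D(z,y)|\lesssim c/|z-y|$, which is not locally integrable even when $b$ is bounded. The integral $\int_D G(x,z)|b(z)||\partial_z G(z,y)|\,dz$ then diverges logarithmically near $z=y$, and the 3G estimate you need cannot be obtained from your gradient bound no matter how one exploits the surplus $\la_1-1$. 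What is actually required, and what the paper proves (Propositions~\ref{prop:estGK} and~\ref{gradParGreen}, Theorem~\ref{thm:GradEst}), is the sharper pole estimate $|\partial_z G(z,y)|\lesssim M(|z-y|)=K(|z-y|)/|z-y|$ whenever $|z-y|\le\delta(z)$. This is derived not by differentiating the heat kernel representation, but from the identity $G_{\{0\}^c}(x,y)=K(x)+K(y)-K(x-y)$ together with the Ikeda--Watanabe formula and the Poisson kernel bounds (Lemma~\ref{est:poissBall}, Proposition~\ref{PforD}); differentiating $\int_0^\infty p_D^t\,dt$ under the integral sign only recovers the weak bound. Once the sharp bound is in hand, the correct 3G estimate $\frac{G(x,z)|\partial_z G(z,y)|}{G(x,y)}\lesssim M(\dez\wedge|y-z|)$ (Lemma~\ref{est:GGdG}) matches the Kato kernel exactly.

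There is also a second, structural omission. The 3G/iteration step gives a small constant $\eta$ only when $\diam(D)$ is small relative to $b$, not because $b$ is ``restricted to a set small in $\pK_1$'': the function $b$ is fixed, and smallness comes from shrinking the domain of integration (Lemmas~\ref{lem:boundKappa} and~\ref{Theorem1s}). Passing from small $D$ to the general finite union of intervals then requires the localization machinery of Section~5: the Harnack inequality for $\tL$ (Lemma~\ref{HIforL}), the boundary Harnack principle (Lemma~\ref{BHPforL}), the comparison of Poisson kernels \eqref{PoissonComp}, and a case analysis over a partition of $D\times D$ into diagonal/near-boundary/far regions. Your sketch collapses this into the 3G step, which would leave the proof incomplete even if the gradient estimate were fixed.
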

Here WLSC and WUSC are the classes of functions satisfying a weak lower and weak upper scaling condition, respectively (see Section 2 for definitions). Set $D$ should be considered as an one-dimensional case of a bounded $C^{1,1}$ set, see Definition \ref{def:C11}. 

Generally, we follow the approach of \cite{MR2892584} and \cite{2017-TG-TJ-GZ-pms}, however there are some important differences. Although the geometry of the set $D$ is much simpler than in higher dimensions, it seems that the one dimensional case sometimes demands more delicate arguments. One of the main difficulties are the proper estimates for derivative of the Green function. As it was mentioned in \cite{MR2892584}, for $d=1$, the available estimates  
\begin{align}\label{eq:GradEst_weak}
	|\partial_x G_D(x,y)|\le c G_D(x,y) /(\delta_D(x) \land |x-y|) 
\end{align}
are not integrable near $y$. The estimates \eqref{eq:GradEst_weak} hold, i.e. if $\nu'(r)/r$ is non-increasing (see \cite[Lemma 3.2]{2017-TG-TJ-GZ-pms} and \cite[Theorem 1.4]{TKMR}). To overcome this difficulty, we improve the estimates \eqref{eq:GradEst_weak} near the pole in $y$, see Theorem \ref{thm:GradEst}. This result is new even for the fractional Laplacian. We emphasize here that we make no additional assumption on the monotonicity of $\nu'(r)/r$ as mentioned above. 
Like in the mentioned papers, our mail tool is the perturbation formula. First, we use it to obtain estimates for sets $D$ with a small radius. Since the Green function $G_D(x,y)$ is bounded, we do not use the perturbation series as in \cite{MR2892584} and \cite{2017-TG-TJ-GZ-pms}. Instead, we propose a simpler iteration argument.

We note also that one of our standing assumptions is $\la_1>1$. It may be understood that the rank of the operator $\mathcal{L}$ is larger than 1. Without this assumption the drift term may have the stronger effect than $\mathcal{L}$ on the behavior of the Green function of the $\tilde{\mathcal L}$. Any results concerning the cases $\la \le 1$ would be interesting, however for $\la <1$, Theorem  \ref{Theorem1} cannot hold in the form above (see the Introduction of \cite{MR2892584} for more details)

The paper is organized as follows. In Section 2 we define the process $X$ and present its basic properties. In Section 3, we introduce the Green function of $X$, prove the estimates for its derivative and some 3G-like inequalities.  In Section 4, we define the operator $\tilde{\mathcal{L}}$ and the Green function of the underlying  Markov process. Lastly, in Section 5, we prove Theorem \ref{Theorem1}.

When we write $f(x) \stackrel{C}{\approx} g(x)$, we mean that there is a number $0 < C < \infty$ independent of $x$, i.e.
a constant, such that for every $x$, $C^{-1} f (x) \le g(x) \le C f (x)$. If the value of $C$ is not important we simply write $f(x) \approx g(x)$. The notation 
$C = C(a, b, \ldots, c)$ means that $C$ is a constant which depends only on $a, b, \ldots , c$.

We use a convention that numberded constants denoted by capital letters do not change throughout the paper. 
For a symmetric function  $f:\R\rightarrow [0,\infty)$ we shall often write $f(r)=f(x)$ for any $x \in \R$ with $|x| = r$.
\section{Preliminaries}
In what follows, $\RR$ denotes the Euclidean space of real numbers, $dy$ stands for the Lebesgue measure on $\R$. Without further mention we will only consider Borelian sets, measures and functions in $\RR$. As usual, we write $a \land b = \min(a,b)$ and $a \vee b = \max(a,b)$. 
We let $B(x,r)=\{y\in \RR \colon |x-y|<r\}$.
For the arbitrary set $D\subset \RR$, the distance to the complement of $D$, will be denoted by
$$\dex =\dist(x,D^c)\,.$$

\begin{definition}
Let $\lt\in [0,\infty)$ and
 $\phi$ be a non-negative non-zero  function on $(0,\infty)$.
We say that
$\phi$ satisfies {the} {\it \bfseries weak lower scaling condition} (at infinity) if there are numbers
$\la>0$ and  $\lC \in(0,1]$  such that
\begin{equation}\label{eq:LSC}
 \phi(\lambda\theta)\ge
\lC\lambda^{\,\la} \phi(\theta)\quad \mbox{for}\quad \lambda\ge 1, \quad\theta>\lt.
\end{equation}

In short, we say that $\phi$ satisfies WLSC($\la, \lt,{\lC}$) and write $\phi\in\WLSC{\la}{ \lt}{\lC}$.
If $\phi\in\WLSC{\la}{0}{\lC}$, then we say
that $\phi$ satisfies the {\bfseries  \emph{global} weak lower scaling condition}.

Similarly, we consider
 $\ut\in [0,\infty)$.
The  {\it \bfseries weak upper scaling condition} holds if there are numbers $\ua<2$
and $\uC{\in [1,\infty)}$ such that
\begin{equation}\label{eq:USC}
 \phi(\lambda\theta)\le
\uC\lambda^{\,\ua} \phi(\theta)\quad \mbox{for}\quad \lambda\ge 1, \quad\theta> \ut.
\end{equation}
In short, $\phi\in\WUSC{\ua}{ \ut}{\uC}$. For {\bfseries  \emph{global} weak upper scaling} we require $\ut=0$ in \eqref{eq:USC}.
\end{definition}

Throughout the paper, $X_t$ will be the pure-jump symmetric unimodal L\'evy process on $\RR$. The L\'evy measure $\nu$ of $X_t$ is symmetric and non-increasing, so it admits the density $\nu$, i.e., $\nu(dx) = \nu(|x|)dx$.   Hence  the characteristic exponent $\psi$ of $X_t$ is radial as well.   

We assume that (see Theorem \ref{Theorem1})
\begin{align}
\psi  & \in \WLSC{\la}{0}{\lC} \cap \WUSC{\ua}{0}{\uC}\,, \label{eq:assum1}\\
\psi  & \in  \WLSC{\la_1}{1}{\lC_1}, \qquad  \mbox{for some } \la_1>1\,. \label{eq:assum2}
\end{align}

\noindent Following \cite{MR632968}, we define
$$
h(r)=\int_\R \(1\wedge \frac{|x|^2}{r^2}\)\nu(|x|)dx, \qquad r>0\,.
$$
Let us notice that
$$
h(\lambda r)\leq h(r)\leq \lambda^2  h(\lambda r), \quad \lambda>1.
$$
Moreover, by \cite[Lemma 1 and (6)]{MR3165234}
$$
2^{-1}\psi(1/r)\leq h(r)\leq \CXX \psi(1/r).
$$
Here, we may choose $\CXX = \pi^2/2$ but it will be more convenient to write this constant as $\CXX$.
We define the function $V$ as follows,	
$$V(0)=0 \,\,\,\mathrm{and}\,\,\, V(r)=1/\sqrt{h(r)}, \quad r>0.$$
Since $h(r)$ is non-increasing, $V$ is non-decreasing. We have
\begin{equation}\label{subadd} V(r)\leq V(\lambda r)\leq \lambda V(r),\quad r\geq 0 ,\,\lambda >1.
\end{equation}

\noindent By weak scaling properties of $\psi$ and the property $h(r) \approx \psi(1/r)$, we get
\begin{equation}\label{scalV1} \(\frac{\lC}{2\CXX}\)^{1/2}\lambda^{\la/2}\leq \frac{V(\lambda r)}{V(r)}\leq (2\uC  \CXX)^{1/2}\lambda^{\ua/2},\quad r> 0 ,\,\lambda >1,
\end{equation}
\begin{equation}\label{scalV2}
\frac{V(\eta r)}{V(r)} \leq \(\frac{2\CXX}{\lC_1}\)^{1/2}\eta^{\la_1/2},\quad \,\eta<1,\,r<1 .
\end{equation}
Therefore,
$ V   \in \WLSC{\la/2}{0}{\sqrt{\lC/(2\CXX)}} \cap \WUSC{\ua/2}{0}{\sqrt{2\uC\CXX}}$.\\

\begin{remark}\label{remScalExp}
The threshold $(0,1)$ in scaling of $V$ in \eqref{scalV2} may be replaced by any bounded interval at the expense of constant $\sqrt{2\CXX/\lC_1}$ (see  \cite[Section 3]{MR3165234}), i.e., for any $R>1$, there is a constant $c$ such that
\begin{equation}\label{scalV2R}
\frac{V(\eta r)}{V(r)} \leq \ c \eta^{\la_1/2},\quad \,\eta<1,   r<R .
\end{equation}
\end{remark}

\noindent We define  
$$M(r) = \frac{V^2(r)}{r^2},\,\,\,r > 0.$$
We note that $M(\cdot)$ is decreasing and $\lim_{r \to 0^+} M(r) = \infty$. To simplify the notations how the constants depend on the parameters, we put
$$\sigma=(\ua,\uC,\la,\lC)\quad \text{ and } \quad \underline{\sigma}=(\sigma,\la_1,\lC_1).$$
Hence, e.g., writing $c=c(\sigma)$, we mean that $c$ depends on $\ua,\uC,\la,\lC$.

The  global weak  lower scaling condition (assumption (\ref{eq:assum1})) implies $p_t(x)$ is jointly continuous on $(0,\infty)\times \R$ ($e^{-t\psi}\in L^1(\R)$) and (see \cite[Lemma 1.5]{MR3249349})
\begin{eqnarray}
p_t(x)&\stackrel{C}{\approx}& [V^{-1}(\sqrt{t})]^{-1}\wedge \frac{t}{V^2(|x|)|x|},\quad t>0,\,x\in\R,  \label{p_t_L}\\
\nu(x)&\stackrel{C}{\approx}&\frac{1}{V^2(|x|)|x|},\quad x\neq 0, \label{nu_L}
\end{eqnarray}
where $C=C(\sigma)$.

Let us denote
$$
p(t,x,y)=p_t(y-x)\,.
$$
By \cite[Theorem 1.1 (c)]{2017arXiv171007793G}, we have
\begin{equation}\label{dp_t_L}
|\partial_x p(t,x,y)|\leq c \frac{1}{V^{-1}(\sqrt{t})} p(t,x,y), \quad t>0, x,y\in\R.
\end{equation}

\noindent We consider a compensated potential kernel
  $$K(x)=\int^\infty_0(p_s(0)-p_s(x))ds, \quad x\in\RR.$$ 
By symmetry and \cite[Theorem II.19]{MR1406564},
the monotone convergence theorem implies
$$K(x)=\frac{1}{\pi}\int^\infty_0(1-\cos xs)\frac{1}{\psi(s)}ds=\frac{1}{x\pi}\int^\infty_0(1-\cos s)\frac{1}{\psi(s/x)}ds,\quad x\neq 0.$$
By \cite[Proposition 2.2]{grzywny_ryznar_2016}, $K$ is subadditive. 
\begin{lem}\label{est:gradK}
For every $R > 0$ there exists a constant $\CXV = \CXV(\underline{\sigma}, R\vee1)$ such that 
$$
\vert\partial_x K(x)\vert \leq \CXV M(|x|\wedge R),\,\,\,\,x\in \R.
$$
\end{lem}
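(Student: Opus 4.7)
The plan is to differentiate under the integral and write $\partial_x K(x) = -\int_0^\infty \partial_x p_s(x)\,ds$, then dominate the integrand pointwise by the gradient estimate \eqref{dp_t_L}, which gives $|\partial_x p_s(x)| \leq c\,p_s(x)/V^{-1}(\sqrt{s})$. The whole task therefore reduces to estimating
$$
J(x) := \int_0^\infty \frac{p_s(x)}{V^{-1}(\sqrt{s})}\,ds.
$$

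First I would handle $|x|\leq R$. Set $T=V^{2}(|x|)$ and split $J(x)$ at $s=T$. On $[0,T]$ the off-diagonal form of \eqref{p_t_L} yields $p_s(x)\leq c\,s/(V^{2}(|x|)|x|)$, whereas on $[T,\infty)$ the on-diagonal form gives $p_s(x)\leq c/V^{-1}(\sqrt{s})$. The remaining ingredient is a pointwise lower bound on $V^{-1}(\sqrt{s})$. Applying WUSC of $V$ in the reverse direction, for $s\geq T$ one gets
$$
V^{-1}(\sqrt{s})\geq c\,|x|\,(s/T)^{1/\ua},
$$
while for $s\leq T$ the localized $\la_1$-scaling of $V$ on bounded intervals (Remark~\ref{remScalExp}) yields
$$
V^{-1}(\sqrt{s})\geq c(R)\,|x|\,(s/T)^{1/\la_1}.
$$
Inserting these reduces the two pieces to elementary power integrals of the form $\int_0^T s^{1-1/\la_1}\,ds$ and $\int_T^\infty s^{-2/\ua}\,ds$, which are both finite because $\la_1>1>1/2$ and $\ua<2$. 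After simplification using $T=V^{2}(|x|)$, each piece yields the bound $C(R)\,V^{2}(|x|)/|x|^{2}=C(R)\,M(|x|)$.

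For $|x|>R$ I would invoke unimodality of $p_s$: $p_s(x)\leq p_s(R)$, hence $J(x)\leq J(R)$, which by the previous step is bounded by $C(R)\,M(R)$. Combining the two cases gives $|\partial_x K(x)|\leq \CXV\,M(|x|\wedge R)$.

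The main obstacle is the small-$s$ contribution: the factor $1/V^{-1}(\sqrt{s})$ is singular as $s\to 0^+$, and the off-diagonal decay of $p_s(x)$ tames it only when the lower-scaling exponent of $V$ exceeds $1/2$. The global exponent $\la/2$ need not satisfy this, so the stronger near-origin scaling $\la_1>1$ from Remark~\ref{remScalExp} is indispensable, and that is precisely where the dependence of $\CXV$ on $R$ enters.
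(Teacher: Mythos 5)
Your proof is correct and takes essentially the same route as the paper: dominate $|\partial_x p_s|$ by $c\,p_s/V^{-1}(\sqrt s)$ via \eqref{dp_t_L}, insert \eqref{p_t_L}, split the time integral at $V^2(|x|\wedge R)$, and control $1/V^{-1}(\sqrt s)$ by the $\ua$-scaling of $V$ for large $s$ and by the localized $\la_1$-scaling (Remark~\ref{remScalExp}) for small $s$. The only cosmetic difference is that you reduce to $|x|\le R$ up front via unimodality ($p_s(x)\le p_s(R)$), while the paper absorbs the large-$|x|$ case into the bound using monotonicity of $r\mapsto V^2(r)\,r$.
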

\begin{proof}
By symmetry we consider only $x\geq 0$. Let $r = x \wedge R$. Since $x \mapsto p_s(x)$ is nonincreasing on $(0,\infty)$, monotonicity of $V(x)$, \eqref{p_t_L} and \eqref{dp_t_L}  imply

\begin{eqnarray*}
0\leq\partial_x K(x) &=& \partial_x\int\limits_0^\infty \int\limits_0^x \partial_{\rho} p_s(\rho)\,d\rho\,ds = \int\limits_0^\infty |\partial_x p_s(x)|\,ds\\ &\leq& c\int\limits_0^\infty \frac{1}{V^{-1}(\sqrt{s})^2}\wedge\frac{s}{V^2(x)xV^{-1}(\sqrt{s})}\,ds \\
&\leq & c\int\limits_0^\infty \frac{1}{V^{-1}(\sqrt{s})^2}\wedge\frac{s}{V^2(r)rV^{-1}(\sqrt{s})}\,ds \\
&=& c\int\limits_{V^2(r)}^\infty \frac{1}{V^{-1}(\sqrt{s})^2} \,ds + \frac{c}{V^2(r)r}\int\limits_0^{V^2(r)} \frac{s}{V^{-1}(\sqrt{s})}\,ds.
\end{eqnarray*}

By \cite[Remark 4]{MR3165234}, $V^{-1} \in \WLSC{\frac{2}{\ua}}{0}{\uC^{-2/\ua}}$, where $1 < \ua < 2$, hence

$$
\int\limits_{V^2(r)}^\infty \frac{1}{V^{-1}(\sqrt{s})^2} \,ds 
\leq \frac{c_1V(r)^{4/\ua}}{r^2}\int\limits_{V^2(r)}^\infty \frac{1}{s^{2/\ua}}ds = \frac{c_1}{1-\frac{2}{\ua}}\frac{V^2(r)}{r^2}, 
$$
where $c_1 = \uC^{4/\ua}$.
By explanation of \cite[Remark 4]{MR3165234} and \eqref{scalV2}, we have 
\begin{equation}
\frac{V^{-1}(\eta t)}{V^{-1}(t)} \geq c\eta^{\frac{2}{\la_1}}
\end{equation}
for $0 < t < 1$, $\eta < 1$ and some constant $c = c(\la_1, \lC_1, \CI, V(1))$. This implies

\begin{align*}
\int\limits_0^{V^2(r)} \frac{s}{V^{-1}(\sqrt{s})}\,ds &= \int\limits_0^{V^2(r)\wedge 1} \frac{s}{V^{-1}(\sqrt{s})}\,ds + \int\limits_{V^2(r)\wedge 1}^{V^2(r)} \frac{s}{V^{-1}(\sqrt{s})}\,ds \\
&\leq \frac{c_2}{r}\int\limits_0^{V^2(r)\wedge 1} \frac{V(r)^{2/\la_1}}{s^{1/\la_1-1}}\,ds + c_3\frac{V^4(r)}{2V^{-1}(1)} \leq c_4(1+R)\frac{V^4(r)}{r}.
\end{align*}
Hence,
$$
\frac{1}{V^2(r)r}\int\limits_0^{V^2(r)} \frac{s}{V^{-1}(\sqrt{s})}\,ds \leq c_4(1+R)\frac{V^2(r)}{r^2}.
$$
\end{proof}
By \cite[Lemma 2.14 with $\la_1$]{grzywny_ryznar_2016}, for $|x| \leq R$,
\begin{equation}\label{est:KbyV}K(x) \approx \frac{V^2(|x|)}{|x|}.
\end{equation}
Hence,  
\begin{equation}\label{MappK}
|\partial K(x)| \leq M(|x|\wedge R) \approx \frac{K(|x|\wedge R)}{|x|\wedge R}. 
\end{equation}

Analogously to $\alpha$-stable processes, we define the Kato class for gradient perturbations.
\begin{definition}
We say that a function $b \colon \RR \to \RR$ belongs to the Kato class $\pK_1$  if
\begin{equation*}  
\lim_{r\to0^+}\sup_{x\in\RR}\int_{B(x,r)}\frac{K(|x-z|)}{|x-z|} |b(z)|dz=0.
\end{equation*}
\end{definition}
We note that $L^{\infty}(\R)\subset \pK_1$.
Since $\frac{K(r)}{r} \approx \frac{V^2(r)}{r^2}$ for small $r>0$, in this paper we will use the condition \eqref{eq:Kc} in the form
\begin{equation}\label{eq:Kc}
\lim_{r\to0^+}\sup_{x\in\RR}\int_{B(x,r)}\frac{V^2(|x-z|)}{|x-z|^2} |b(z)|dz=0.
\end{equation}

We consider the time-homogeneous transition probabilities
$$
P_t(x,A) =\int_A p(t, x,y)dy, \qquad t>0, x\in \R, A\subset\R.
$$
By Kolmogorov's and Dinkin-Kinney's theorems the transition
probability $P_t$ define in the usual way Markov probability measure 
$\{\PP^x,\,x\in \R\}$ on the space $\Omega$ of the
right-continuous and left-limited functions $\omega :[0,\infty)\to \R$.
We let $\EE^x$ be the corresponding expectations.
We will denote by $X=\{X_t\}_{t\geq 0}$  the canonical process on $\Omega$, $X_t(\omega) = \omega(t)$. Hence,
$$
\PP(X_t \in B) = \int_B p(t,x,y)dy. 
$$

For any open set $D$, we define {\it the first exit time}\/ of the process $X_t$ from $D$,
$$\tau_D=\inf\{t>0: \, X_t\notin D\}\,.$$
Now, by the usual Hunt's formula, we define the transition density of the process {\it killed}\/ when leaving $D$
(\cite{MR0264757}, \cite{MR1329992}, \cite{MR3249349}):
$$
p_D(t,x,y)=p(t,x,y)-\EE^x[\tau_D<t;\, p(t-\tau_D, X_{\tau_D},y)],\quad t>0
,\,x,y\in \R \,.
$$
We briefly recall some well known properties of $p_D$ (see \cite{MR3249349}).
The function $p_D$ satisfies the Chapman-Kolmogorov equations
$$
\int_\R p_D(s,x,z)p_D(t,z,y)dz=p_D(s+t,x,y)\,,\quad s,t>0 ,\,
x,y\in \R\,.
$$
Furthermore, $p_D$ is jointly continuous when $t\neq 0$, and we have
\begin{equation}\label{eq:gg}
  0\leq p_D(t,x,y)=p_D(t,y,x)\leq p(t,x,y)\,.
\end{equation}
In particular,
\begin{equation}
  \label{eq:9.5}
  \int_\R p_D(t,x,y)dy\leq 1\,.
\end{equation}
If $D$ is a $C^{1,1}$ domain (see definition in Section 3), by Blumenthal's 0-1 law, symmetry of $p_t$, we have
$\PP^x(\tau_D=0)=1$ for every $x\in D^c$.
In particular, $p_D(t,x,y)=0$ if $x\in D^c$ or $y\in D^c$.

\section{Green function of $\mathcal{L}$}

We define the Green function of $X_t$ for $D$,
\begin{align}\label{def_g}
 G_D(x,y)=\int_0^\infty p_D(t,x,y)dt,  \qquad x,y \in \R\,
\end{align}
and the Green operator
\begin{align}\label{def_g_op}
 G_D\,\phi(x)=\int_\R G_D(x,y)\phi(y)dy,  \qquad x \in \R\,.
\end{align}

From now on, every time we will mention the Green function, it should be understand as a Green function of $D$, and then $G = G_D$. 

\begin{definition}
We say that a function $f:\R\rightarrow\R$ is a ${\mathcal L}-$harmonic (or simply harmonic) function on an open bounded set $D\subset \R$ if for any open $\overline{F} \subset D$ and $x\in F$ 
$$
f(x) = \E^xf(X_{\tau_F}).
$$
We say that a function $f$ is a regular ${\mathcal L}-$harmonic (or simply regular harmonic) function  on an open bounded set $D\subset \R$  if for every $x \in D$
$$
f(x) = \E^xf(X_{\tau_D}).
$$
\end{definition}

Note that for fixed $x \in D$ the function $G(x, \cdot)$ is harmonic on $D\setminus\{x\}$ and regularly harmonic on $D\setminus\overline{B(x,\varepsilon)}$, where $B(x,\varepsilon)\subset D$. By \cite[Theorem 1.1]{MR2603019}, we know that the function $K(x)$ is harmonic on $\{0\}^c$.

\begin{definition}\label{def:C11}
We call a set $D\subset \R$ a $C^{1,1}$ class set at scale $r>0$ if it is an union of open intervals of length at least $r$ and distanced one from another at least $r$. The number $r_0 = \sup\lbrace r: D$ is at scale $r\rbrace$ is called a localization radius.
\end{definition}

Definition \ref{def:C11} corresponds with the definition of multidimensional $C^{1,1}$ set with localization radius $r_0$. In what follows, we assume that  
$$
D \mbox{ is a } C^{1,1} \mbox{ set with } \diam D < \infty \mbox{ and localization radius } r_0=r_0(D).
$$
Some constants will depend on the ratio $\diam D/r_0$ called the distortion of the set $D$.

\begin{lem}\label{GreenEst1}
There exists a constant  $\CII=\CII(\underline{\sigma},\mathrm{diam}(D)/r_0,1\vee \mathrm{diam}(D))$ such that
\begin{equation}
\label{est:G}
G(x,y)\stackrel{\CII}{\approx} V(\dex)V(\dey)\left(\frac{1}{\sqrt{\dex\dey}}\wedge\frac{1}{|x-y|}\right),\quad x,y\in D.
\end{equation}
\end{lem}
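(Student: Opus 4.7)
The plan is to derive \eqref{est:G} from sharp two-sided estimates for the killed heat kernel $p_D(t,x,y)$. The target factorization is
\begin{equation*}
p_D(t,x,y)\stackrel{C}{\approx} \Bigl(1\wedge \frac{V(\delta_x)}{\sqrt{t}}\Bigr)\Bigl(1\wedge \frac{V(\delta_y)}{\sqrt{t}}\Bigr)\,p(t,x,y),
\end{equation*}
valid for $t$ bounded by a multiple of $V^{2}(\diam D)$. This type of bound is standard for symmetric unimodal L\'evy processes on $C^{1,1}$ open sets under our weak scaling hypotheses, and in the 1D setting it can be lifted from results such as \cite{MR3249349} and the references cited in the Introduction, once the survival probability bound $\PP^{x}(\tau_D>t)\stackrel{C}{\approx} 1\wedge V(\delta_x)/\sqrt{t}$ is in place. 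The survival bound follows from \eqref{eq:assum1}--\eqref{eq:assum2} together with the $C^{1,1}$ structure of $D$ (in 1D, a finite disjoint union of open intervals, each of length at least $r_0$, separated by gaps of at least $r_0$); the assumption $\la_1>1$ enters crucially here, ensuring fast exit from small neighborhoods of boundary endpoints.

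Granted the one-factor upper bound $p_D(t,x,y)\le C(1\wedge V(\delta_x)/\sqrt{t})p(t,x,y)$ (obtained from the strong Markov property at $\tau_D$ combined with \eqref{p_t_L}) and its lower-bound counterpart, I would bootstrap the two-factor estimate via the Chapman--Kolmogorov identity $p_D(t,x,y)=\int p_D(t/2,x,z)p_D(t/2,z,y)\,dz$, carrying out the resulting convolution explicitly using \eqref{p_t_L} and the scaling of $V$.

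With the heat kernel factorization at hand, the Green function estimate reduces to computing $G(x,y)=\int_0^\infty p_D(t,x,y)\,dt$. I would split the time domain according to the relative order of $t$, $V^{2}(\delta_x)$, $V^{2}(\delta_y)$ and $V^{2}(|x-y|)$, and invoke \eqref{p_t_L} in each regime. For $|x-y|\le \sqrt{\delta_x\delta_y}$ the dominant contribution comes from the small-$t$ diagonal regime and is comparable to $K(|x-y|)\approx V^{2}(|x-y|)/|x-y|$, itself comparable to $V(\delta_x)V(\delta_y)/\sqrt{\delta_x\delta_y}$; for $|x-y|> \sqrt{\delta_x\delta_y}$ the large-$t$ tail dominates and yields $V(\delta_x)V(\delta_y)/|x-y|$. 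Combining the two cases via the scaling \eqref{scalV1}--\eqref{scalV2R} produces the right-hand side of \eqref{est:G}.

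The main obstacle is verifying the factorization of $p_D$ in one dimension. In higher dimensions this is a well-documented consequence of the $C^{1,1}$ condition and global weak scaling, but in 1D with finite-scale scaling \eqref{eq:assum2} the survival probability near each boundary endpoint must be analyzed separately, and the role of $\la_1>1$ is decisive in ruling out pathological slow exits. A secondary technical point is the multi-component structure of $D$: paths crossing a gap between two components contribute corrections that are controlled using $\nu(\{|z|\ge r_0\})<\infty$ and $\diam D<\infty$; this is precisely where the constant $\CII$ picks up its dependence on the distortion $\diam D/r_0$ and on $1\vee \diam D$.
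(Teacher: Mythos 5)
Your overall route (heat-kernel factorization, then time integration, then simplification of the resulting expression) is the same one the paper follows, but the paper does not prove the factorization from scratch: it quotes it directly from \cite[Proposition 4.4 and Theorem 4.5]{MR3249349}, including the exponential factor $e^{-2\gamma(D)t}$ and the cut-off at $V^2(r_0)$ that are needed in any case to make the time integral converge. The subsequent computation of $\int_0^\infty p_D\,dt$ is likewise delegated to \cite[Theorem 7.3 (iii) and Corollary 7.4]{2013arXiv1303.6449C}, which yields $G(x,y)\approx V(\dex)V(\dey)\left(\frac{1}{V^{-1}(\sqrt{V(\dex)V(\dey)})}\wedge\frac{1}{|x-y|}\right)$, and the step that is actually new in the paper's proof is the replacement of $V^{-1}(\sqrt{V(\dex)V(\dey)})$ by $\sqrt{\dex\dey}$.

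Your proposal compresses precisely that last step, and there it contains a genuine error. For $|x-y|\le\sqrt{\dex\dey}$ you claim the dominant contribution is $\approx K(|x-y|)\approx V^2(|x-y|)/|x-y|$ and that this is in turn $\approx V(\dex)V(\dey)/\sqrt{\dex\dey}$. The second comparability is false: since $\la_1>1$, the map $r\mapsto V^2(r)/r$ is almost increasing (bounded below by an increasing function) on bounded intervals, so $K(|x-y|)\to 0$ as $|x-y|\to 0$, while $V(\dex)V(\dey)/\sqrt{\dex\dey}$ stays bounded away from $0$. In dimension one with $\la_1>1$ the Green function is bounded and continuous across the diagonal and does \emph{not} have a $K$-type singularity; what dominates near the diagonal is not the $t<V^2(|x-y|)$ part of the time integral but the intermediate window $V^2(|x-y|)\le t\le V^2(\dex\wedge\dey)$, which gives $\approx V^2(\dex\wedge\dey)/(\dex\wedge\dey)$, and one then uses the elementary fact that $|x-y|\le\sqrt{\dex\dey}$ forces $\dex\approx\dey$ (via subadditivity and scaling of $V$) to convert this into the stated form. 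This is exactly the content of the paper's case analysis around \eqref{eq:VmVV}--\eqref{eq:dxdycom}, and it is the part of the lemma that requires care, not the heat-kernel factorization you identified as the main obstacle.
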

\begin{proof}
Note that (see \cite[Proposition 4.4 and Theorem 4.5]{MR3249349}),
$$
p_D(t,x,y) \approx e^{-2\gamma(D) t}\left(\frac{V(\dex)}{\sqrt{t/2}\wedge V(r_0)}\wedge1\right)\left(\frac{V(\dey)}{\sqrt{t/2}\wedge V(r_0)}\wedge1\right)p(t\wedge V^2(r_0), x, y),
$$
 where $ \frac{1}{8} (\mathrm{diam}(D)/r_0)^2 \leq \gamma(D)V^2(r_0) \leq c(\mathrm{diam}(D)/r_0)^{1/2}$.   
Now, integrating them against time, we get
\begin{align*}
G(x,y)\stackrel{c_1}{\approx} &\left(V(\delta_x)\wedge V(r_0)\right)\left(V(\delta_y)\wedge V(r_0)\right)p(V^2(r_0),x,y)\\&+\int^{V^2(r_0)}_0\left(\frac{V(\delta_x)}{\sqrt{t}}\wedge 1\right)\left(\frac{V(\delta_y)}{\sqrt{t}}\wedge 1\right)p(t,x,y)dt,
\end{align*}
where the comparability constant $c_1$ depends on the scaling characteristics in \eqref{scalV1} and \eqref{scalV2} and a distortion of $D$. Now, by the same calculation as in the proof of \cite[Theorem 7.3 (iii) and Corollary 7.4]{2013arXiv1303.6449C}, we obtain
$$G(x,y)\stackrel{c_2}{\approx} V(\dex)V(\dey)\left(\frac{1}{V^{-1}\left(\sqrt{V(\dex)V(\dey)}\right)}\wedge\frac{1}{|x-y|}\right),$$
where the comparability constant $c_2$ depends on the scaling characteristics in \eqref{scalV1} and \eqref{scalV2}, a distortion of $D$ and $1\vee \mathrm{diam}(D)$. 

Let us consider $x,y\in D$ such that $\frac{1}{V^{-1}\left(\sqrt{V(\dex)V(\dey)}\right)} < \frac{1}{|x-y|}$, this means
\begin{equation}\label{eq:VmVV}
V^2(|x-y|) < V(\dex)V(\dey).
\end{equation} 
Without a loss of generality we may and do assume $\dex \leq \dey$. Then,
$$V^2(|x-y|) < V(\dex)V(\dex + (\dey - \dex)) \leq  V(\dex)V(\dex + |x-y|) \leq  V(\dex)[V(\dex)+V(|x-y|)],$$
which implies $V(|x-y|)\leq 2V(\delta_x)$. By monotonicity and subadditivity of $V$ we obtain that
$$V(\delta_x)\leq V(\delta_y)\leq V(|x-y|)+V(\delta_x)\leq 3V(\delta_x).$$ 
As a consequence of \eqref{scalV1}, we obtain 
\begin{equation}\label{eq:comdxdy}
 \dey \leq (18\CXX/\la)^{1/(2\la)} \dex.
 \end{equation}
Again, by \eqref{scalV1}, we get 
\begin{equation}\label{eq:dxdycom}
V^{-1}\left(\sqrt{V(\dex)V(\dey)}\right) \stackrel{c_3}{\approx} \sqrt{\dex\dey},
\end{equation}
where $c_3=c_3(\la,\lC)$. Now, let 
$$
V^2(|x-y|) \geq V(\dex)V(\dey).
$$
We only need to show that $2|x-y|^2 \geq \dex\dey$. Without the loss of generality we can and do assume $\dex < \dey$. By monotonicity of $V$, $|x-y| \geq \dex$. The case $\dey \leq |x-y|$ is obvious. For $\dex \leq |x-y| < \dey$, we have $\dey \leq |x-y| + \dex \leq 2|x-y|$, which completes the proof. 
\end{proof}

\subsection{Estimates of the Poisson kernel}

If $D$ is $C^{1,1}$, it is known that the harmonic measure of $D$ has a density and we call it the Poisson kernel. By the Ikeda-Watanabe formula \cite{MR0142153} it is equal to
\begin{equation}\label{eq_IW}
P_D(x,z)=\int_DG(x,y)\nu(z-y)dy,\quad x\in D,\,z\in \overline{D}^c.
\end{equation}

\begin{lem}\label{est:poissBall}
Let $R>0$ and $B=B(0,R)$. Then  
$$P_B(x,z) \stackrel{\CIV}{\approx} \frac{V(\dex)}{V(\dez)|x-z|}\left(\frac{V(R)}{V(\dez)}\wedge 1\right),\quad x\in B,\,z\in B^c,$$
where $\CIV=\CIV(\underline{\sigma},1\vee R)>0$. 
\end{lem}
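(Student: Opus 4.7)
The plan is to evaluate the Ikeda--Watanabe integral \eqref{eq_IW} using the Green function estimate of Lemma \ref{GreenEst1} together with the L\'evy density estimate \eqref{nu_L}. By symmetry, assume $z>R$, so $\dez = z-R$; writing $\delta_y = R-|y|$, the ball $B=(-R,R)$ splits into the half $B_+=(0,R)$ near $z$, on which $|z-y|=\dez+\delta_y$, and the far half $B_-=(-R,0)$, on which $|z-y|$ is comparable to $R\vee\dez$. Note also that $|x-z|=\dex+\dez$ when $x\in B_+$ while $|x-z|\approx R+\dez$ when $x\in B_-$.

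I would estimate the near contribution $J_+$ first. Setting $u=\delta_y$, substitution yields
\[
J_+ \;\approx\; V(\dex)\int_0^R V(u)\left(\frac{1}{\sqrt{\dex\,u}}\wedge\frac{1}{|x-y|}\right)\frac{du}{V^2(\dez+u)(\dez+u)}.
\]
I would split the $u$-range according to (i) whether the $\min$ in $G$ is realized by the $\sqrt{\dex u}$ branch or by the $1/|x-y|$ branch, and (ii) whether $u\le\dez$ (so the L\'evy factor is comparable to $1/[V^2(\dez)\dez]$) or $u\ge\dez$ (L\'evy factor $\approx 1/[V^2(u)u]$). In each of the four resulting subregions the $u$-integral is computable in closed form using the weak scaling \eqref{scalV1}--\eqref{scalV2} of $V$. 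The dominant contribution comes from $u$ of order $\min(\dex,\dez)$ and produces precisely $\frac{V(\dex)}{V(\dez)|x-z|}\bigl(V(R)/V(\dez)\wedge 1\bigr)$: the factor $V(R)/V(\dez)\wedge 1$ reflects the dichotomy $\dez\leq R$ versus $\dez>R$, and the factor $1/|x-z|$ arises from the $1/|x-y|$ branch of $G$ integrated against the L\'evy density.

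For the far side, $\nu(z-y)\approx 1/[V^2(R\vee\dez)(R\vee\dez)]$ is essentially constant on $B_-$, so
\[
J_- \;\approx\; \nu(R\vee\dez)\int_{B_-}G(x,y)\,dy,
\]
and a direct bound using Lemma \ref{GreenEst1} together with the standard estimate $\int_B G(x,y)\,dy \le cV(\dex)V(R)$ shows $J_-\le cJ_+$ in each regime, so the near-side estimate suffices for the upper bound. The matching lower bound follows by restricting $J_+$ to the subregion where $u\approx\min(\dex,\dez)$ and $|x-y|\approx|x-z|$: the integrand there is already of the claimed order over an interval of comparable length. The main difficulty is the bookkeeping across the four regimes $\{x\in B_\pm\}\times\{\dez\le R \text{ or } \dez>R\}$ and verifying uniformly that $1/|x-z|$ and $V(R)/V(\dez)\wedge 1$ emerge from the integration; the weak scaling of $V$ is exactly what makes each regime collapse to the same closed-form expression.
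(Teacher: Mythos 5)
Your strategy---Ikeda--Watanabe plus direct integration of the Green function bound against the L\'evy density, with a case split by region---is sound in outline, but it is genuinely different from the paper's argument, and the paper's route is both shorter and less error-prone. For $|z|\ge 2R$ the paper uses a one-line argument: $P_B(x,z)\approx\nu(|z|)\,\EE^x\tau_B\approx V(\dex)V(R)\nu(|z|)$, which already coincides with the claimed formula. For $|z|<2R$, instead of computing the $y$-integral directly, the paper uses \eqref{scalV2R} and \eqref{scalV1} to sandwich the two ratios $V(\dez)/V(|z-y|)$ and $V(\dey)/V(|z-y|)$ between powers $(\cdot)^{\la_1/2}$ and $(\cdot)^{\ua/2}$ of the corresponding distance ratios. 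This turns $G_B(x,y)\,\nu(|z-y|)$ into the integrand for the \emph{stable} Poisson kernel (of index $\la_1$ from above, $\ua$ from below), up to the explicit prefactor $\tfrac{V(\dex)}{V(\dez)}(\dez/\dex)^{\la_1/2}$; one then invokes the known closed form of $P_B^{S\alpha S}$ from \cite[Theorem A]{MR0126885}. All the bookkeeping you would do by hand is thus packaged into a single comparison with a formula that is already known.

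Two specific caveats about your sketch as written. First, the heuristic that the dominant contribution to $J_+$ comes from $u\approx\min(\dex,\dez)$ is incorrect in roughly half the cases. If $x\in B_+$ and $\dex<\dez\le R$, the contribution from $u\approx\dex$ is of order $\tfrac{V^2(\dex)}{V^2(\dez)\dez}$, a factor $V(\dex)/V(\dez)<1$ below the target $\tfrac{V(\dex)}{V(\dez)\dez}$; the contribution of the right size comes from $u\approx\dez$ (and, more precisely, from the whole range $u\lesssim\dez$, using the $\la_1>1$ scaling to control $\int V(u)\,du/u$). A cleaner statement of the heuristic is that the effective cutoff is at $u\approx\dez\wedge R$. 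Second, the assertions that $J_-\lesssim J_+$ in every regime and that restricting $J_+$ to a subinterval already gives the matching lower bound are plausible but not checked; the case $x\in B_-$ with $\dez$ small, where the near half $B_+$ is far from $x$ and $|x-y|\approx R$ there, is the one to verify carefully. If you are going to carry out the direct computation, you will need to track these; otherwise the paper's stable-comparison shortcut is the more efficient path.
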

\begin{proof}
By \eqref{eq_IW}, Lemma \ref{GreenEst1} and \eqref{nu_L}, there is $c_1=c_1(\underline{\sigma},1\vee R)$ such that
\begin{align*}
P_B(x,z) & = \int_BG_B(x,y)\nu(|y-z|)dz \\ 
& \stackrel{c_1}{\approx} \int_BV(\dex)V(\dey)\left(\frac{1}{\sqrt{\dex\dey}}\wedge\frac{1}{|x-y|}\right)\frac{dy}{V^2(|z-y|)|z-y|}.
\end{align*}

By Remark \ref{remScalExp}, we obtain inequality \eqref{scalV2} for $r<3R$ with constant $c_2=c_2(\la_1,\lC_1,1\vee R)$. Hence, for $|z|<2R$, we have
\begin{align*}
(2\uC\CXX)^{-1/2}\left(\frac{\dez}{|z-y|}\right)^{\ua/2}\leq \frac{V(\dez)}{V(|z-y|)} &\le c_2    \left(\frac{\dez}{|z-y|}\right)^{\la_1/2},\\   (2\uC\CXX)^{-1/2}\left(\frac{\dey}{|z-y|}\right)^{\ua/2}\leq \frac{V(\dey)}{V(|z-y|)} &\le c_2    \left(\frac{\dey}{|z-y|}\right)^{\la_1/2}. 
\end{align*}
These imply
\begin{align*}
P_B(x,z) 
& \le c_1c_2^2 \int_B \frac{V(\dex)}{V(\dez)}\left(\frac{1}{\sqrt{\dex\dey}}\wedge\frac{1}{|x-y|}\right)\frac{(\dey\dez)^{\la_1/2}}{|z-y|^{1+\la_1}} dy \\
& \stackrel{c_3}{\approx}  \int_B \frac{V(\dex)}{V(\dez)}\left(\frac{\dez}{\dex}\right)^{\la_1/2} G_B^{S\la_1 S}(x,y)\frac{dy}{|z-y|^{1+\la_1}} \\
& \approx  \frac{V(\dex)}{V(\dez)}\left(\frac{\dez}{\dex}\right)^{\la_1/2} P_B^{S\la_1 S}(x,z).
\end{align*}
Here, $c_3=c_3(c_1,c_2,\la_1)$ and $S\la_1 S$ refers to the symmetric $\alpha$-stable process with index of stability $\la_1$.  Similarly, we obtain
$$P_B(x,z)\geq c_4 \frac{V(\dex)}{V(\dez)}\left(\frac{\dez}{\dex}\right)^{\ua/2} P_B^{S\ua S}(x,z),$$
where $c_4=c_4(c_1,\ua,\uC)$.
By formula for $P_B^{S\la_1 S}(x,z)$ \cite[Theorem A]{MR0126885}, we get the assertion of the lemma for $|z|<2R$. 

If $|z|\geq 2R$, by \eqref{subadd} and \cite[Proposition 3.5]{MR3007664}, we get
$$P_B(x,z)\approx \nu(|z|)\EE^x\tau_B\approx V(\dex)V(R)\nu(|z|),$$
which implies the claim of the lemma.

\end{proof}
\begin{prop}\label{PforD}
There exists a constant $\CV =\CV(\underline{\sigma},\mathrm{diam(D)}/r_0,1\vee \mathrm{diam(D)})$ such that 
$$
P_D(x,z)\stackrel{\CV}{\approx}\frac{V(\dex)}{V(\dez)|x-z|}\left(\frac{V(\mathrm{diam(D)})}{V(\dez)}\wedge 1\right), \quad x\in D,\,z\in D^c.
$$

\end{prop}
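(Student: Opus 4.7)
The plan is to estimate the integral representation of $P_D$ provided by the Ikeda--Watanabe formula \eqref{eq_IW}. Substituting the two-sided Green function bound of Lemma \ref{GreenEst1} and the L\'evy density estimate \eqref{nu_L} into \eqref{eq_IW} gives
\[
P_D(x,z)\stackrel{c}{\approx} V(\dex)\int_D V(\dey)\Bigl(\frac{1}{\sqrt{\dex\dey}}\wedge\frac{1}{|x-y|}\Bigr)\frac{dy}{V^2(|z-y|)\,|z-y|},
\]
so the task reduces to evaluating this integral. Write $D=\bigcup_{k=1}^n I_k$ for the decomposition of $D$ into its connected components (open intervals of length at least $r_0$ and mutual distance at least $r_0$), and let $I^*$ be a component of $D$ closest to $z$; the model answer on a single interval is already supplied by Lemma \ref{est:poissBall}.

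For the lower bound, the strong Markov property applied at $\tau_{I_j}$ with $x\in I_j$ yields
\[
P_D(x,z) = P_{I_j}(x,z) + \int_{D\setminus I_j} P_{I_j}(x,y)\,P_D(y,z)\,dy \ge P_{I_j}(x,z).
\]
When $x\in I^*$ this already reproduces the claimed bound after applying Lemma \ref{est:poissBall} to $I^*$, because $V(|I^*|)\approx V(\diam D)$ up to the distortion $\diam D/r_0$ (which is absorbed into $\CV$). When $x\notin I^*$ I would exploit the remaining integral: for $y$ in the central part of $I^*$ the factor $P_D(y,z)$ is bounded below by $P_{I^*}(y,z)$ by the same strong Markov argument, while $P_{I_j}(x,y)$ is controlled from below by Lemma \ref{est:poissBall} applied to $I_j$, and the two estimates combine to give the correct size.

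For the upper bound I would repeat the weak-scaling trick from the proof of Lemma \ref{est:poissBall}: by Remark \ref{remScalExp} applied on the range $|z-y|\le\dez+\diam D$,
\[
\frac{V(\dey)V(\dez)}{V^2(|z-y|)}\le c\Bigl(\frac{\dey\,\dez}{|z-y|^2}\Bigr)^{\la_1/2},
\]
which reduces the task to estimating the Poisson kernel of the symmetric $\la_1$-stable process on $D$, handled componentwise using the ball formula of \cite{MR0126885} and the gap condition $\dist(I_k,I_\ell)\ge r_0$. The main technical obstacle is the ``far'' regime $\dez\gtrsim\diam D$, where several components of $D$ contribute non-trivially and no single $P_{I_j}$ captures the right-hand size. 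In that regime I would instead use $|z-y|\in[\dez,\dez+\diam D]\approx\dez$ for $y\in D$ to obtain $\nu(|z-y|)\approx\nu(\dez)$ and therefore
\[
P_D(x,z)\approx \nu(\dez)\int_D G_D(x,y)\,dy\approx \nu(\dez)\,V(\dex)\,V(\diam D),
\]
with the mean-exit-time comparability following by integrating Lemma \ref{GreenEst1} over $D$ and using once more that $\diam D/r_0$ is bounded; since $|x-z|\approx\dez$ here, this matches the claimed formula.
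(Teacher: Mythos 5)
Your lower bound and your treatment of the far regime $\dez\gtrsim\diam D$ are both correct and parallel what the paper does in spirit: the strong Markov decomposition $P_D(x,z)\ge P_{I_j}(x,z)$, together with the intermediate integral for $x\notin I^*$, is a legitimate alternative to the paper's direct lower bound (the paper instead restricts the Ikeda--Watanabe integral over $\tilde D$ to the component $\tilde B$ nearest $z$), and the identity $P_D(x,z)\approx\nu(\dez)\E^x\tau_D$ in the far regime is exactly the mechanism used in the last step of Lemma \ref{est:poissBall}.

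The genuine gap is in the near-regime upper bound. After the scaling reduction you are left with (up to factors) $\int_D G_D^{S\la_1S}(x,y)\,\nu^{S\la_1S}(|z-y|)\,dy=P_D^{S\la_1S}(x,z)$, i.e.\ the statement of the very proposition you are proving, specialized to the $\la_1$-stable process on the multi-component set $D$. The ``ball formula'' of \cite{MR0126885} gives $P_B^{S\la_1S}$ only for a single interval $B$; there is no explicit formula for a union of intervals, and $P_D^{S\la_1S}$ is not a sum or maximum of $P_{I_k}^{S\la_1S}$ (indeed $P_{I_k}^{S\la_1S}(x,\cdot)\equiv 0$ whenever $x\notin I_k$). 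Trying to write $P_D^s$ in terms of $P_{I_j}^s$ via strong Markov re-introduces $P_D^s$ on the right-hand side, so the reduction is circular. The paper avoids this by using the scaling trick only for the integral over the interval $B\ni x$ (where Lemma \ref{est:poissBall} genuinely applies because $G_D\approx G_B$ on $B\times B$), and then estimates the remaining integral $\mathrm{II}=\int_{\tilde D}G(x,y)\nu(|z-y|)\,dy$ directly, replacing $\tfrac{1}{\sqrt{\dex\dey}}\wedge\tfrac{1}{|x-y|}$ by $\tfrac{1}{|x-y|}\le\tfrac{1}{r_0}$ and then splitting $\tilde D$ into $\{\dey\le\dez\wedge\diam D\}$ and its complement, using $|z-y|\ge\dez$ on the first piece and $|z-y|\ge\dey$ on the second together with the weak lower scaling of $V$. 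You would need to carry out this (or an equivalent) direct integral estimate for the remote components; the detour through the stable Poisson kernel does not shorten it.
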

\begin{proof}
Let $x \in D$, $z \in D^c$. By Lemma \ref{est:poissBall} we consider only the case when $D$ is a sum of at least two open intervals. Let $B$ be an open interval such that $x\in B$ and $\tilde{D}=D\setminus B$ is open.
By the Ikeda-Watanabe formula
\begin{equation}\label{eq:general_D}
P_D(x,z) =\int_B G(x,y)\nu(|y-z|)dy + \int_{\tilde{D}}G(x,y)\nu(|y-z|)dy=:\mathrm{I}+\mathrm{II}.
\end{equation}
Lemma \ref{est:poissBall} implies 
$$G(x,y)\stackrel{c_1}{\approx} G_B(x,y),\quad x,y\in B,$$
for $c_1=c_1(C_2)$. Hence, by Lemma \ref{est:poissBall}
\begin{equation}\label{eq:Poi_gen1}
\mathrm{I} \stackrel{c_1}{\approx} \int\limits_B G_B(x,y)\nu(|y-z|)dy \stackrel{c_2}{\approx}\frac{V(\dex)}{V(\mathrm{dist}(z,B))|x-z|}\left(\frac{V(\mathrm{diam}(B))}{V(\dez)}\wedge 1\right),
\end{equation}
where $c_2=c_2(C_2,C_3)$. If $\mathrm{dist}(z,B)=\dez$,  the lower bound follows by \eqref{eq:Poi_gen1}. Suppose $\mathrm{dist}(z,B)<\dez$ and let $\tilde{B}$ be a connected component of $\tilde{D}$ such that  $\mathrm{dist}(z,\tilde{B})=\dez$. Therefore, by Lemma \ref{GreenEst1},
$$\mathrm{II}\geq C_2\int_{\tilde{B}}\frac{V(\dex)V(\dey)}{\textrm{diam}(D)}\nu(|y-z|)dy\geq \frac{C_2 r_0}{2\mathrm{diam}(D)}\frac{V(\dex)}{|x-z|}\int_{\tilde{B}}V(\dey)\nu(|y-z|)dy.$$
Now, \eqref{nu_L} and \eqref{subadd} imply
\begin{align*}\int_{\tilde{B}}V(\dey)\nu(|y-z|)dy&\geq  \frac{c_4}{V^2(2\dez)\dez}\int^{\dez\wedge r_0/2}_{0}V(s)ds\geq  \frac{c_4}{V^2(2\dez)\dez}\int^{\dez\wedge r_0/2}_{(\dez\wedge r_0/2)/2}V(s)ds \\&\geq \frac{c_4}{4} \frac{(\dez\wedge r_0/2)V(\dez\wedge r_0/2)}{V^2(2\dez)\dez}.
\end{align*}
Hence, we obtain the lower bound in this case.

Next, we will prove the upper bound for the second integral. Let $\lambda= \dez\wedge \mathrm{diam}(D)$ and $D_1 = \tilde{D}\cap\{y: \dey \leq \lambda\}$ and $D_2 = \tilde{D}\cap\{y: \dey > \lambda\}$. By  weak scaling conditions, we obtain
\begin{eqnarray*}
\mathrm{II}&\stackrel{c_5}{\approx}&\int\limits_{\tilde{D}}\frac{V(\dex)V(\dey)}{|x-y|}\frac{dy}{V^2(|z-y|)|z-y|}\leq  \frac{V(\dex)}{r_0}\int\limits_{\tilde{D}}\frac{V(\dey)dy}{V^2(|z-y|)|z-y|} \\
&\leq & \frac{V(\dex)}{r_0}\left(\frac{|D_1|V(\lambda)}{V^2(\dez)\dez}
+\int\limits_{D_2}\frac{dy}{V(\dey)\dey}\right) \leq  \frac{V(\dex)}{r_0}\left(\frac{2\mathrm{diam}(D)}{r_0}\frac{\lambda V(\lambda)}{V^2(\dez)\dez}
+c_6 \frac{\textbf{1}_{\lambda=\dez}}{V(\lambda)}\right)\\&\approx& \frac{V(\dex)}{V(\dez)|x-z|}\left(\frac{V(\diam{D})}{V(\dez)}\wedge1\right), 
\end{eqnarray*}
where $c_5=c_5(C_2,C)$ and $c_6$ depends only on the scaling characteristics.
This completes the proof.
\end{proof}

\subsection{Estimates of $\partial_x G(x,y)$}
Below, we will prove various estimates of $\partial_x G(x,y)$ according to the range of variables $x$ and $y$. We summarize these results in  Theorem \ref{thm:GradEst}.
First, we will need the following auxiliary lemma.
\begin{lemma}\label{lem:M/V}
Let $x \in D$. There is a constant $\CVI=\CVI(\underline{\sigma},\mathrm{diam(D)}/r_0,1\vee \mathrm{diam(D)})$ such that
\begin{align*}
\int_{\RR} \frac{M(|x-z|)}{V(\dez)} dz \le \CVI \frac{V(\dex)}{\dex}.
\end{align*}
\end{lemma}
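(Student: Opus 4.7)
The plan is to split the integral over $D$ into the component of $D$ containing $x$ and the other components, and then within the main component to decompose according to whether $z$ is close to $x$ or close to the boundary. A useful preliminary observation is that, since $V$ satisfies $\WUSC{\ua/2}{0}{\cdot}$ with $\ua/2<1$, the quotient $V(r)/r$ is essentially decreasing on $(0,\diam D]$; in particular $V(\dex)/\dex$ is bounded below by a constant depending only on $\underline\sigma$ and $\diam D$. Consequently, any contribution to the integral which is bounded by a constant is automatically $\le C V(\dex)/\dex$.

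This already handles the components $(a_i,b_i)$ of $D$ other than the one containing $x$: there $|x-z|\ge r_0$ and so $M(|x-z|)\le M(r_0)$, while the scaling of $V$ at $0$ gives $\int_{a_i}^{b_i} dz/V(\delta_z)=2\int_0^{L_i/2} du/V(u)\le c\,L_i/V(L_i)\le c'$. Since the number of such components is at most $\diam D/r_0$, their total contribution is a constant.

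For the component $(a,b)\ni x$, assume without loss of generality $\dex=x-a\le b-x$, and split at the midpoint $m=(a+b)/2$. Using $\delta_z=z-a$ on $(a,m)$ and $\delta_z=b-z$ on $(m,b)$, the substitutions $u=z-a$ and $u=b-z$ reduce the problem to bounding
\begin{equation*}
F(\alpha)=\int_0^{L/2}\frac{M(|\alpha-u|)}{V(u)}\,du,\qquad L=b-a,\ \alpha\in\{\dex,\,b-x\},
\end{equation*}
by a multiple of $V(\alpha)/\alpha$. The estimation rests on three scaling-based building blocks:
\begin{align*}
\int_0^r \frac{V^2(s)}{s^2}\,ds &\le c\,\frac{V^2(r)}{r}\quad(\la_1>1\text{ gives integrability at }0),\\
\int_0^r \frac{du}{V(u)} &\le c\,\frac{r}{V(r)}\quad(\ua/2<1),\\
\int_r^{\diam D}\frac{V(s)}{s^2}\,ds &\le c\,\frac{V(r)}{r}\quad(\ua/2<1).
\end{align*}
I would then split the $u$-range into $(0,\alpha/2)$, $(\alpha/2,\alpha)$, $(\alpha,2\alpha)\cap(0,L/2)$, and $(2\alpha,L/2)$. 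On each sub-piece one of the two factors is essentially constant and the other matches one of the three model integrals, yielding a bound of order $V(\alpha)/\alpha$ per piece. Since $V(\cdot)/\cdot$ is essentially decreasing and $\alpha\ge\dex$, this is in turn $\le c V(\dex)/\dex$.

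The sub-piece $(2\alpha,L/2)$ is the main obstacle: the naive bound $M(u-\alpha)\le M(\alpha)$ combined with $\int du/V(u)\le cL/V(L)$ would yield $M(\alpha)\cdot L/V(L)$, which is of the wrong order. The fix is to observe that $u\ge 2\alpha$ forces $u-\alpha\ge u/2$, so by monotonicity and scaling of $V$ one has $V(u-\alpha)\ge cV(u)$ and $(u-\alpha)^2\ge u^2/4$, whence
\begin{equation*}
\frac{M(u-\alpha)}{V(u)}=\frac{V(u-\alpha)}{(u-\alpha)^2}\cdot\frac{V(u-\alpha)}{V(u)}\le c'\frac{V(u)}{u^2},
\end{equation*}
and the third building block applied at $r=\alpha$ delivers the desired $V(\alpha)/\alpha$.
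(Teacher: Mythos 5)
Your proof is essentially correct, but it follows a noticeably different route from the paper's. The paper makes a single split around $x$: it separates $B_1 = B(x,\delta_x/2)$ from its complement, handles $B_1$ via the $\la_1>1$ scaling (exactly as your first building block), and then splits $B_1^c$ only according to $\delta_x \le \delta_z$ versus $\delta_z < \delta_x$, applying the $\ua/2<1$ scaling estimate pointwise to $V(\delta_z)/\delta_z^2$ (resp. to $V^2(\delta_x)/(\delta_x^2 V(\delta_z))$) and integrating once over all of $D\setminus B_1$, with the number of components ($\lesssim \diam D/r_0$) absorbed into the constant. This avoids both the explicit component-by-component treatment and your midpoint reparametrisation $u = z-a$ (or $b-z$) inside the component containing $x$. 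Both approaches use the same two scaling inputs ($\la_1>1$ for the local singularity, $\ua/2<1$ for the tails), so the mathematical content is the same; the paper's decomposition is just shorter.

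One step in your write-up is genuinely under-argued, though: the treatment of the components not containing $x$. You bound their total contribution by a ``constant'' (roughly $M(r_0)\sum_i L_i/V(L_i)$) and then invoke the lower bound on $V(\delta_x)/\delta_x$ to conclude. But both that ``constant'' and the lower bound on $V(\delta_x)/\delta_x$ carry absolute $V$-scale factors (they involve $V(r_0)$, $V(L_i)$, $V(\diam D)$), so it is not automatic that the ratio is controlled by $\underline\sigma$ and the distortion as the lemma requires; the inequality in the statement is scale-invariant (both sides scale like $V$), so the constant must be too. The cancellation does work out --- one needs $V(r_0)\,\diam D/r_0^2 \lesssim V(\delta_x)/\delta_x$, which follows by combining the essentially-decreasing property of $V(r)/r$ with the lower scaling $V(\diam D)\gtrsim (\diam D/r_0)^{\la/2}V(r_0)$, giving a factor $(\diam D/r_0)^{2-\la/2}$ --- but this needs to be said. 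The paper sidesteps the issue entirely by keeping a $V(\delta_x)/\delta_x$ factor explicit throughout rather than comparing against an absolute constant. Your main-component estimate (the four sub-intervals and three model integrals) is fine as written, including the fix on $(2\alpha,L/2)$ where $u-\alpha\ge u/2$ makes the naive bound $M(\alpha)$ unnecessary.
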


\begin{proof}
Let $B_1 = B(x,\dex/2)$. By \eqref{scalV2R}, we have
\begin{align*}
\int_{B_1} \frac{M(|x-z|)}{V(\dez)} dz &\le 2 \int_{B_1} \frac{V^2(|x-z|)}{|x-z|^2 V(\dex)} dz \le c_1 \int_{B_1} \frac{V^2(\dex) |x-z|^{\la_1-2}}{\dex^{\la_1}V(\dex)} dz \le c_2 \frac{V(\dex)}{\dex}.
\end{align*}
Note that for $z \not\in B$,  we have $\dez \le 3|x-z|$ and $\dex \le 2|x-z|$. Hence, by \eqref{scalV1},
\begin{align*}
\int_{B_1^c \cap \{\dex \le \dez\}} \frac{M(|x-z|)}{V(\dez)} dz &\le c_3 \int_{\{\dex \le \dez\}} \frac{V(\dez)}{\dez^2 } dz \le c_4 \int_{\{\dex \le \dez\}} \frac{V(\dex) \dez^{\ua/2-2}}{\dex^{\ua/2}} dz \le c_5 \frac{V(\dex)}{\dex}, \\
\int_{B_1^c \cap \{\dez < \dex\}} \frac{M(|x-z|)}{V(\dez)} dz &\le c_6 \int_{\{\dez < \dex\}} \frac{V^2(\dex)}{\dex^2 V(\dez)} dz \le c_7 \int_{\{\dez < \dex\}} \frac{V(\dex)}{\dex^{2-\ua}\dez^{\ua}} dz  \le c_8\frac{V(\dex)}{\dex}.
\end{align*}
\end{proof}

\begin{prop}\label{prop:estGK}
There is a constant $\CVII=\CVII(\underline{\sigma},\mathrm{diam(D)}/r_0,1\vee \mathrm{diam(D)})$ 
such that
$$|\partial_xG(x,y)|\leq \CVII\left( M(|x-y|) + \frac{G(x,y)}{\dex}\mathbbm{1}_{\frac{|x-y|}{2} > \dex}\right)$$\end{prop}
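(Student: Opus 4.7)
Proof proposal. The plan is to treat the two ranges of $|x-y|$ separately, and in the near-diagonal range to split further according to whether the pole $y$ lies inside the ball $B(x,\dex/2)$.

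If $y\notin B(x,\dex/2)$, i.e.\ $|x-y|>\dex/2$, the function $z\mapsto G(z,y)$ is non-negative and $\mathcal{L}$-harmonic on $B(x,\dex/2)\subset D\setminus\{y\}$. The plan is to combine the standard gradient estimate for $\mathcal{L}$-harmonic functions (a consequence of the heat-kernel derivative bound \eqref{dp_t_L}) with the Harnack inequality available under \eqref{eq:assum1}, yielding $|\partial_x G(x,y)|\le C G(x,y)/\dex$. This already suffices when $|x-y|/2>\dex$. When $\dex/2<|x-y|\le 2\dex$ the indicator vanishes, so I convert $G(x,y)/\dex$ into $C M(|x-y|)$ using Lemma~\ref{GreenEst1}, the relations $\dex\approx |x-y|$ and $\dey\le 3\dex$ (from $|\dex-\dey|\le|x-y|$), and the weak upper scaling of $V$ in \eqref{scalV1}.

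The remaining case is $|x-y|\le \dex/2$, where $y\in B(x,\dex/2)$ and the gradient estimate cannot be applied directly to $G(\cdot,y)$. Here the plan is to use the recurrent-case decomposition
\[
G(x,y)=h_y(x)-K(y-x),\qquad h_y(x):=\EE^x K(y-X_{\tau_D}),
\]
whose validity is checked by noting that both sides are $\mathcal{L}$-harmonic on $D\setminus\{y\}$, vanish on $D^c$ (since $\PP^x(X_{\tau_D}=x)=1$ for $x\in D^c$), and share the same singular behaviour at $y$. Differentiating gives $\partial_x G(x,y)=h_y'(x)-\partial_x K(y-x)$; Lemma~\ref{est:gradK} controls the second term by $C M(|x-y|)$, while the first is handled by applying the harmonic-function gradient estimate to the non-negative $\mathcal{L}$-harmonic function $h_y$ on $B(x,\dex/4)\subset D$. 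Writing $h_y=G(\cdot,y)+K(y-\cdot)$ and using $\dey\in[\dex/2,3\dex/2]$ (forced by the sub-case assumption), Lemma~\ref{GreenEst1} together with \eqref{est:KbyV} bound the supremum on $B(x,\dex/4)$ of each summand by a quantity of order $V^2(\dex)/\dex$. This produces $|h_y'(x)|\le C M(\dex)\le C M(|x-y|)$, the last inequality by monotonicity of $M$ and $\dex\ge 2|x-y|$, closing the case.

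The principal difficulty is the last paragraph: one must justify the decomposition $G=h_y-K$ and provide a harmonic-function gradient estimate in the generality of the paper's standing hypotheses, whereas all remaining manipulations are routine applications of the weak scaling of $V$ from \eqref{scalV1} and the comparability $K(r)\approx V^2(r)/r$ recorded in \eqref{est:KbyV}.
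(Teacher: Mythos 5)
Your decomposition $G(x,y)=h_y(x)-K(y-x)$ is essentially the one the paper uses (after recentring so that $0\notin\overline{D}$, the paper writes, via \eqref{G0} and symmetry, $\partial_x G(x,y)=\EE^y\partial_x K(x-X_{\tau_D})-\partial_x K(x-y)$), but the way you propose to estimate the two terms has a genuine gap that the paper's argument is specifically designed to avoid.

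The decisive problem is your appeal, in both branches, to a ``standard gradient estimate for $\mathcal{L}$-harmonic functions'' of the form $|u'(x)|\lesssim \sup_{B(x,r)}u/r$, claimed to follow from \eqref{dp_t_L}. No such estimate is proved or cited in the paper, and the introduction explicitly flags this: the bound \eqref{eq:GradEst_weak} is stated to hold only under the \emph{extra} hypothesis that $\nu'(r)/r$ is non-increasing (see \cite[Lemma 3.2]{2017-TG-TJ-GZ-pms}, \cite[Theorem 1.4]{TKMR}), and the authors emphasize that they make \emph{no} such assumption. Passing from the heat-kernel gradient bound \eqref{dp_t_L} to a gradient estimate for $\mathcal{L}$-harmonic functions is not routine, because the differentiated term $\partial_x\EE^x[\tau_{B}<t,\,u(X_{\tau_B})]$ again involves differentiating an exit distribution in the starting point. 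In fact, within the paper's own logic the Green-function gradient estimate $|\partial_x G(x,y)|\lesssim G(x,y)/\dex$ (Lemma~\ref{ineq:dgldx}) and the bound $|\partial_x G(x,y)|\lesssim M(\dex\wedge|x-y|)$ (Corollary~\ref{cor:estGradGreena}) are both \emph{corollaries} of Proposition~\ref{prop:estGK}; invoking such an estimate to prove Proposition~\ref{prop:estGK} is circular.

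Two further remarks on how the paper sidesteps this. First, the paper writes the exit expectation from $y$, i.e.\ $\EE^y\partial_x K(x-X_{\tau_D})$, so the $x$-derivative passes directly under the expectation (the random variable $X_{\tau_D}$ is a functional of the path started at $y$ and does not depend on $x$); your $h_y'(x)=\partial_x\EE^x K(y-X_{\tau_D})$ has $x$ in the starting position, which is exactly what one cannot easily differentiate. Second, your case split is upside-down relative to the actual difficulty: when $|x-y|\le\dex/2$ the paper's remaining integral $I=\EE^y[|\partial K(x-X_{\tau_D})|;\,|x-X_{\tau_D}|\le|x-y|/2]$ vanishes identically (because $X_{\tau_D}\in D^c$ forces $|x-X_{\tau_D}|\ge\dex\ge|x-y|/2$), so that case is trivial; the substantive work is the case $\dex<|x-y|/2$, which the paper handles by the Poisson-kernel bound of Proposition~\ref{PforD} and the moment estimate Lemma~\ref{lem:M/V} rather than by any interior gradient estimate for harmonic functions. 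To repair your proposal you would have to either establish the interior gradient estimate under the paper's standing hypotheses (which would itself be a new result) or, more economically, follow the paper: differentiate $\EE^y\partial_x K(x-X_{\tau_D})$ inside the expectation, split according to whether $|x-X_{\tau_D}|$ is larger than $|x-y|/2$, and estimate the small-distance part by $\int M(|x-z|)P_D(y,z)\,dz$ using Lemma~\ref{lem:M/V}.
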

\begin{proof}
Since $X_t$ is translation invariant, we may and do assume that $0\notin D$. Let $x, y \in D$ and $x\neq y$. It is known (see \cite[Lemma 2.3]{grzywny_ryznar_2016})
\begin{equation}\label{G0}
G_{\{0\}^c}(x,y)=K(x)+K(y)-K(y-x).
\end{equation} 
Hence, by symmetry,
\begin{eqnarray*}
G(x,y)&=&G_{\{0\}^c}(x,y)-\EE^y G_{\{0\}^c}(x,X_{\tau_D})\\
&=&K(y)-K(x-y)-\EE^y K(X_{\tau_D})+\EE^yK(x-X_{\tau_D}).
\end{eqnarray*}
By Lemma \ref{est:gradK} and  
the dominated convergence theorem,
\begin{eqnarray*}
\partial_x G(x,y)&=& \EE^y \partial_x K(x-X_{\tau_D})-\partial_x K(x-y).
\end{eqnarray*}
Again, by Lemma \ref{est:gradK} and \eqref{MappK}, for $ |x-z| \geq |x-y|/2$, we have 
\begin{align*}
|\partial_x K (x- z)| \leq c_1 M(|x - z|\wedge \diam(D)) \leq c_2 M(|x - y|). 
\end{align*}
This implies 
\begin{eqnarray*}
|\partial_x G(x,y)|&\leq& c_3M(|x-y|) + \EE^y| \partial K(x-X_{\tau_D})|\\
& \leq & c_4M(|x-y|) + \EE^y\left[ |\partial K(x-X_{\tau_D})|, |x-X_{\tau_D}| \leq \frac{|x-y|}{2}\right].
\end{eqnarray*}
It remains to estimate
\begin{equation}\label{int:cutGradK}
I:=\EE^y \left[|\partial K(x-X_{\tau_D})|, |x-X_{\tau_D}| \leq \frac{|x-y|}{2}\right].
\end{equation}
If $\dex \geq \frac{|x-y|}{2}$, $I = 0$. 
So let $\dex < \frac{|x-y|}{2}$.  Note that if $|x-z| \leq |x-y|/2$, then $|y-z| \geq |x-y|/2$, and in consequence, by Proposition \ref{PforD},
$$
P_D(y,z) \lesssim \frac{V(\dey)}{V(\dez)}\frac{1}{|y-z|} \leq \frac{V(\dey)}{V(\dez)}\frac{2}{|x-y|}.
$$
By Lemma  \ref{lem:M/V},
\begin{align*}
I 
\leq \int_{D^c\cap B(x, \frac{|x-y|}{2})}M(|x-z|)P_D(y, z)dz
& \leq c_5\frac{V(\dey)}{|x-y|}\int_{\RR} M(|x-z|)\frac{dz}{V(\dez)} \\ & \leq  c_6\frac{V(\dey)}{|x-y|}\frac{V(\dex)}{\dex}.
\end{align*}
Since $\dex \leq \frac{|x-y|}{2}$, we have $\dey \leq \frac{3}{2}|x-y|$ and
$$
\frac{V(\dey)}{|x-y|}\frac{V(\dex)}{\dex} \lesssim\frac{ G(x,y)}{\dex}.
$$
Hence, 
$$
|\partial_xG(x,y)|\lesssim M(|x-y|) + \frac{G(x,y)}{\dex}\mathbbm{1}_{\frac{|x-y|}{2} > \dex},
$$
which ends the proof.
\end{proof}

By Lemma \ref{GreenEst1} and Proposition \ref{prop:estGK}, we get  a weaker but also useful estimate. 
\begin{cor}\label{cor:estGradGreena}
There is a constant $\CVIII=\CVIII (\underline\sigma, \diam(D)\vee 1)$ such that   for any open $D\neq \R$
\begin{equation}\label{estGradGreena} 
|\partial_xG(x,y)| \leq \CVIII M(\dex\wedge|x-y|).
\end{equation}
\end{cor}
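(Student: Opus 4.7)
The plan is to split into the two regimes dictated by the indicator in Proposition \ref{prop:estGK} and, in each case, control the right-hand side by $M(\dex \wedge |x-y|)$ using the monotonicity of $M$ together with the Green function estimate of Lemma \ref{GreenEst1}.

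First, in the regime $\dex \ge |x-y|/2$, the indicator $\mathbbm{1}_{|x-y|/2 > \dex}$ vanishes, so Proposition \ref{prop:estGK} gives $|\partial_x G(x,y)| \le \CVII M(|x-y|)$. Since $M$ is decreasing and $\dex \wedge |x-y| \le |x-y|$, we have $M(|x-y|) \le M(\dex \wedge |x-y|)$, and the desired bound follows immediately.

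Next, in the regime $\dex < |x-y|/2$, we have $\dex \wedge |x-y| = \dex$, so the goal becomes $|\partial_x G(x,y)| \lesssim M(\dex) = V^2(\dex)/\dex^2$. From Proposition \ref{prop:estGK} it suffices to bound $M(|x-y|)$ and $G(x,y)/\dex$ by a multiple of $M(\dex)$. The first is immediate from monotonicity of $M$. For the second, note that $\dey \le \dex + |x-y| \le \tfrac{3}{2}|x-y|$, so by \eqref{subadd} we have $V(\dey) \le \tfrac{3}{2} V(|x-y|)$. Applying Lemma \ref{GreenEst1} (the $1/|x-y|$ branch, which is the relevant one here), I get
\[
\frac{G(x,y)}{\dex} \le \CII \frac{V(\dex) V(\dey)}{\dex |x-y|} \le \frac{3\CII}{2} \frac{V(\dex)}{\dex} \cdot \frac{V(|x-y|)}{|x-y|}.
\]
Since $M(r) = (V(r)/r)^2$ is decreasing and $\dex \le |x-y|$, we have $V(|x-y|)/|x-y| \le V(\dex)/\dex$, hence $G(x,y)/\dex \lesssim V^2(\dex)/\dex^2 = M(\dex)$, which is exactly $M(\dex \wedge |x-y|)$.

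There is no real obstacle here; the argument is essentially a two-case check based on the geometry in Proposition \ref{prop:estGK}. The only delicate point is making sure the Green function bound is used on the correct branch (we must be in the regime where $1/|x-y|$ dominates $1/\sqrt{\dex \dey}$, which is automatic under $\dex < |x-y|/2$), and keeping track of the dependence of constants on $\underline\sigma$ and $\mathrm{diam}(D) \vee 1$ inherited from $\CII$ and $\CVII$.
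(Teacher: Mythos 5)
Your two-case split is a correct computation, and it is exactly what the paper's one-line proof (``By Lemma \ref{GreenEst1} and Proposition \ref{prop:estGK}\ldots'') appears to invite: in each regime you dominate the right-hand side of Proposition \ref{prop:estGK} by $M(\dex\wedge|x-y|)$ using monotonicity of $M$ (equivalently of $V(r)/r$), the subadditivity \eqref{subadd}, and the upper bound in Lemma \ref{GreenEst1}. However, what you actually prove is weaker than what the corollary claims, in two respects. First, the corollary is asserted for \emph{any} open $D\neq\R$, while Proposition \ref{prop:estGK} and Lemma \ref{GreenEst1} are stated and proved only for $C^{1,1}$ sets. Second, the constant is asserted to be $\CVIII=\CVIII(\underline\sigma,\diam(D)\vee 1)$, with no dependence on the distortion $\diam(D)/r_0$; your constant inherits that dependence through both $\CVII$ and $\CII$. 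You did not flag either discrepancy.

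The route that actually delivers the stated constant and generality bypasses both cited results and goes back to the representation used at the start of the proof of Proposition \ref{prop:estGK}: after translating so that $0\notin D$,
\begin{equation*}
\partial_x G(x,y) = \EE^y\,\partial_x K(x-X_{\tau_D}) - \partial_x K(x-y),
\end{equation*}
which holds for any bounded open $D$. Since $X_{\tau_D}\notin D$, one has $|x-X_{\tau_D}|\ge\dex$ a.s., and both $\dex$ and $|x-y|$ are at most $\diam(D)\le R:=\diam(D)\vee 1$, so Lemma \ref{est:gradK} together with the monotonicity of $M$ gives at once
\begin{equation*}
|\partial_x G(x,y)| \le \CXV\,M(\dex) + \CXV\,M(|x-y|) \le 2\CXV\,M(\dex\wedge|x-y|),
\end{equation*}
with $\CXV=\CXV(\underline\sigma,\diam(D)\vee 1)$. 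This sidesteps the Poisson-kernel estimate (the source of the distortion dependence in Proposition \ref{prop:estGK}) and the two-sided Green-function bound entirely. In short, your argument is sound but more circuitous, applies only to $C^{1,1}$ sets, and yields a strictly worse constant than the one the corollary asserts.
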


\begin{lem}\label{GradGreen}           
  If $f \in \mathcal{K}_1$, then
  \begin{equation}\label{eq:GradGreen}
    \partial_y\int_D  G(y,z)f(z)\,dz = \int_D \partial_y\, G(y,z)f(z)\,dz\,,\quad y \in D\,.
  \end{equation}
\end{lem}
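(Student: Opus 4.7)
The plan is to justify differentiation under the integral sign via dominated convergence applied to difference quotients. Fix $y_0 \in D$. The Kato condition \eqref{eq:Kc} supplies $\rho > 0$ with
$$
\sup_{x \in \R} \int_{B(x, 2\rho)} M(|x-z|)\, |f(z)|\, dz \le 1.
$$
Set $r := \min(\rho/2,\, \delta_{y_0}/8)$ and $U := B(y_0, r) \subset D$. Then $\delta_y \ge 7\delta_{y_0}/8 =: c_0 > 0$ for every $y \in U$. Since $G(\cdot, z)$ is $\mathcal L$-harmonic, hence smooth, on $D \setminus \{z\}$, the mean value theorem provides for every $z \ne y$ and all sufficiently small $h$ an intermediate point $\xi = \xi(z,h) \in U$ with
$$
\frac{G(y+h, z) - G(y, z)}{h} = \partial_y G(\xi, z).
$$
It will suffice to exhibit a majorant $\Phi \in L^1(D)$ dominating $|\partial_y G(\xi, z) f(z)|$ uniformly in $\xi \in U$.

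I would apply Proposition~\ref{prop:estGK} to obtain
$$
|\partial_y G(\xi, z)| \le \CVII \Bigl( M(|\xi - z|) + \frac{G(\xi, z)}{\delta_\xi}\,\mathbbm{1}_{|\xi - z| > 2\delta_\xi}\Bigr),
$$
and treat the two summands separately. For the $M$-term, split $D$ according to whether $z \in B(y_0, \rho)$. If $z \in B(y_0, \rho)$, then since $\xi \in B(y_0, r) \subset B(y_0, \rho/2)$ one has $z \in B(\xi, 2\rho)$, so the choice of $\rho$ bounds $\int_{B(y_0, \rho)} M(|\xi - z|)\,|f(z)|\,dz$ by $1$ uniformly in $\xi$. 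If instead $z \notin B(y_0, \rho)$, then $|\xi - z| \ge \rho/2$, so $M(|\xi - z|) \le M(\rho/2)$, and the remainder is at most $M(\rho/2)\,\|f\|_{L^1(D)} < \infty$ (Kato functions are locally integrable since $M(r) \to \infty$ as $r \to 0^+$, and $D$ is bounded).

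For the indicator term, the condition $|\xi - z| > 2\delta_\xi \ge 2c_0$ keeps $z$ uniformly away from $\xi$, so Lemma~\ref{GreenEst1} yields $G(\xi, z) \le \CII\, V(\diam D)^2/(2c_0)$, and thus $G(\xi, z)/\delta_\xi$ is bounded by a constant on the support of the indicator; its product with $|f|$ integrates over the bounded set $D$ to a finite value. Assembling the two contributions produces the desired $\xi$-independent $\Phi \in L^1(D)$, and dominated convergence delivers \eqref{eq:GradGreen}.

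The main technical point is making the Kato-class smallness on a small ball uniform over the intermediate evaluation point $\xi$ coming from the mean value theorem; the mild radius slack $B(y_0, \rho) \subset B(\xi, 2\rho)$ is what takes care of it. Everything else is a routine packaging of the pointwise gradient bound of Proposition~\ref{prop:estGK} with the local integrability consequence of \eqref{eq:Kc}.
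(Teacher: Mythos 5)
Your argument follows the paper's route: express the difference quotient through an intermediate value of $\partial_y G$, control it with the gradient estimate from Section 3, and let the Kato condition supply integrability. The one technical gap is the dominated-convergence wrapper. There is no $\xi$-independent $L^1$ majorant $\Phi$ for $M(|\xi-z|)|f(z)|$ as $\xi$ ranges over $U$: for every $z \in U$ (a set of positive measure) one has $\sup_{\xi\in U} M(|\xi-z|)=+\infty$, and this persists on every shrinking ball $B(y_0,|h|)$ in which the mean-value point could land. The closing sentence claiming that ``assembling the two contributions produces the desired $\xi$-independent $\Phi\in L^1(D)$'' is therefore false for the first contribution. What your near-ball computation actually establishes is the uniform bound $\sup_{\xi\in U}\int_D M(|\xi-z|)|f(z)|\,dz<\infty$, and it is this \emph{uniform integrability} of the family of integrands, not pointwise domination, that lets the limit pass (Vitali's convergence theorem rather than DCT); this is precisely what the paper means when it says the integrand is ``uniformly in $h$ integrable.'' A further, cosmetic, difference: the paper invokes Corollary~\ref{cor:estGradGreena}, $|\partial_z G(z,y)|\le \CVIII\,M(\dez\wedge|z-y|)$, rather than Proposition~\ref{prop:estGK}; since $M(a\wedge b)\le M(a)+M(b)$ and $\delta_{y+sh}\ge\dey/2$ for $|h|<\dey/2$, the indicator term you handle separately is absorbed and the whole estimate collapses to $\CVIII\bigl(M(\dey/2)+M(|y+sh-z|)\bigr)$ in one line.
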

\begin{proof}
 Let $0<h<\dey/2$.  
Then,
\begin{eqnarray*}
\left|\frac{G(y+h,z)- G(y,z)}{h}\right|&=&\frac{1}{h}\left|\int\limits_0^1\partial_sG(y+sh, z)ds\right|\\
= \left|\int\limits_0^1\partial_yG(y+sh, z)ds\right|&\leq & \CVIII \int\limits_0^1\left( M(\delta_{y+sh}\wedge|y+sh-z|) \right)\,ds \\
&\leq &\CVIII  \int\limits_0^1\left( M(\dey / 2) + M(|y+sh-z|)\right) ds.
\end{eqnarray*}

Since $f\in\mathcal{K}_1$ and the integrand is uniformly in $h$ integrable on and $(0,1)\times D$, which ends the proof. 
\end{proof}

\begin{prop}\label{gradParGreen}
Let $x \in D$, $0 < \varepsilon < \dex$, $B = B(x, \varepsilon)$ and $A = B^c\cap D$. Then,
$$
\partial_x G(x,y)= \int_B \partial_xG(x,z)P_{A}(y,z)dz.
$$
\end{prop}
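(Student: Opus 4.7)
For the statement of this proposition, the identity is to be understood for $y\in A$, where the Poisson kernel $P_A(y,\cdot)$ is defined; for $y\notin A$ the right-hand side is not meaningful. The strategy is first to obtain the integral representation $G(x,y)=\int_B G(x,z)P_A(y,z)\,dz$ via regular harmonicity of $G(x,\cdot)$, and then to differentiate under the integral by a dominated-convergence argument.

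Since $A\subset D\setminus\{x\}$ and $G(x,\cdot)$ is regular harmonic on every bounded open subset of $D\setminus\{x\}$, I would apply $G(x,y)=\E^y G(x,X_{\tau_A})$ for $y\in A$. Using the decomposition $A^c=B\cup D^c$ together with $G(x,\cdot)\equiv 0$ on $D^c$, this yields
$$
G(x,y)=\int_B G(x,z)P_A(y,z)\,dz,\qquad y\in A.
$$
The same identity holds with $x+h$ in place of $x$ for every $|h|<\varepsilon$, since $x+h\in B\subset A^c$ and $G(x+h,\cdot)$ is then also regular harmonic on $A$.

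To differentiate in $x$, I would form the difference quotient, write $\frac{G(x+h,z)-G(x,z)}{h}=\int_0^1\partial_xG(x+sh,z)\,ds$, and prove that this is dominated by an $h$-uniform integrable function of $z$. For $z\in B$ and $|h|$ sufficiently small, $|x+sh-z|\le\varepsilon+|h|<2\,\delta(x+sh)$, so the indicator term in Proposition~\ref{prop:estGK} vanishes and $|\partial_xG(x+sh,z)|\le\CVII\,M(|x+sh-z|)$. Pointwise, the integrand converges to $\partial_xG(x,z)P_A(y,z)$ for every $z\neq x$.

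The main obstacle, and the step I would spend most effort on, is producing this uniform integrable majorant. Here I would exploit the fact that the two singular factors live on disjoint parts of $B$: $M(|x-z|)$ blows up only at $z=x$, while $P_A(y,\cdot)$ can be large only near $\partial B$. I would split $B=B_1\cup B_2$ with $B_1=B(x,\varepsilon/2)$. On $B_1$, every $w\in A$ satisfies $|z-w|\ge\varepsilon/2$, so the Ikeda--Watanabe formula \eqref{eq_IW} for $A$ combined with monotonicity of $\nu$ gives $P_A(y,z)\le\nu(\varepsilon/2)\,\E^y\tau_A$, a constant independent of $z$, and $\int_{B(x,\varepsilon)}M(|x-w|)\,dw<\infty$ because the scaling \eqref{scalV2} forces $M(r)\le c\,r^{\la_1-2}$ near $r=0$ with $\la_1>1$. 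On $B_2$, the inequality $|x+sh-z|\ge\varepsilon/4$ (for $|h|$ small) bounds $M(|x+sh-z|)\le M(\varepsilon/4)$ uniformly, while $\int_{B_2}P_A(y,z)\,dz\le 1$. Putting these pieces together and passing $h\to 0$ under the integral yields the claim.
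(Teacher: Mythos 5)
Your proof follows essentially the same route as the paper's: both obtain $G(x',y)=\int_B G(x',z)P_A(y,z)\,dz$ for $y\in A$ and $x'\in B$ from regular harmonicity of $G(x',\cdot)$ on $A$, and both differentiate in $x'$ after splitting $B$ into the inner ball $B_1=B(x,\varepsilon/2)$, where $P_A(y,\cdot)$ is bounded, and the outer shell $B_2$, where $\partial_{x'}G(x',\cdot)$ is bounded. The paper handles the inner part by noting that $P_A(y,\cdot)\mathbf{1}_{B_1}$ is bounded, hence lies in $\pK_1$, and then invoking Lemma~\ref{GradGreen}; you carry out the dominated-convergence argument directly, and your explicit bound $P_A(y,z)\le\nu(\varepsilon/2)\,\EE^y\tau_A$ on $B_1$ makes precise a boundedness claim the paper leaves implicit.

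There is, however, a gap in your domination on $B_1$. You control the difference quotient by $\CVII\int_0^1 M(|x+sh-z|)\,ds$ (times a constant from $P_A$), but then justify integrability via $\int_{B(x,\varepsilon)}M(|x-w|)\,dw<\infty$. This tacitly assumes $\int_0^1 M(|x+sh-z|)\,ds\le C\,M(|x-z|)$ uniformly for small $|h|$, which is not immediate: when $|x-z|\lesssim|h|$ the $s$-average samples values of $M$ far larger than $M(|x-z|)$ suggests on its face. The bound does hold --- by \eqref{scalV2R} one gets $\int_0^{\rho}M(t)\,dt\le c\,\rho\,M(\rho)$ (using $\la_1>1$), and treating the two cases $|x-z|\ge 2|h|$ and $|x-z|<2|h|$ separately gives $\int_0^1 M(|x+sh-z|)\,ds\le C\,M(|x-z|)$ for $|h|\le\varepsilon/4$ --- but this step needs to be written out, or replaced by the uniform-integrability (Vitali) argument that the paper's Lemma~\ref{GradGreen} uses for the same purpose. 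As written, the final ``putting these pieces together and passing $h\to 0$'' does not close.
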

\begin{proof}
Fix $x \in D$. Then, $G(x, \cdot )$ is regular harmonic on $A = D \cap [ x - \varepsilon, x + \varepsilon]^c$ for every $0 < \varepsilon < \dex$. This means
$$
G(x,y) = \EE^yG(x,X_{\tau_A}) = \int_B G(x,z)P_{A}(y,z)dz,\,\,\,\,\, y \in A.
$$
Let us fix  $y\in A$. For $z \in B$, we define 
$P_1(y, z) = P_{A}(y, z)  \textbf{1}_{B(x, \varepsilon/2)}(z)$ and $P_2(y,z) = P_A(y,z) - P_1(y,z)$. Since $P_1$ is bounded, we have $P_1 \in \pK_1$ and by Lemma \ref{GradGreen}, 
$$
\partial_x\int_B G(x,z)P_1(y,z)dz = \int_B \partial_x G(x,z) P_1(y,z)dz. 
$$

Since $\partial_x G(x,z)$ is finite on the support of $P_2(y, \cdot)$,  by the mean value theorem and the dominated convergence theorem, we get

\begin{align*}
&\lim\limits_{h \rightarrow 0} \frac{\int_B G(x + h,z)P_2(y,z)dz - \int_B G(x,z)P_2(y,z)dz}{h} 
 \\ &= \lim\limits_{h \rightarrow 0} \int_B\frac{ G(x + h,z)- G(x,z)}{h}P_2(y,z)dz = \int_B \partial_x G(x,z)P_2(y,z)dz.
\end{align*}
These imply
\begin{align*}
\partial_x\int_B G(x,z)P_{B^c}(y,z)dz &= \partial_x\int_B G(x,z)P_1(y,z)dz + \partial_x\int_B G(x,z)P_2(y,z)dz \\
& = \int_B \partial_x G(x,z) P_1(y,z)dz + \int_B \partial_x G(x,z)P_2(y,z)dz \\
& = \int_B \partial_xG(x,z)P_{A}(y,z)dz,
\end{align*}
which completes the proof. 
\end{proof}

\begin{lem}\label{ineq:dgldx}
Let $x, y \in D$ and $\dex < 2|x-y|$. Then, there exists a constant $\CIX=\CIX(\underline{\sigma},\mathrm{diam(D)}/r_0,1\vee \mathrm{diam(D)})$  such that
$$
|\partial_xG(x,y)|\leq \CIX\frac{G(x,y)}{\dex}.
$$
\end{lem}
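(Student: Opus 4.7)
\emph{Plan.} The hypothesis $\dex<2|x-y|$ splits naturally into two regimes, depending on which term of Proposition \ref{prop:estGK} is active.

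\emph{Regime I: $\dex<|x-y|/2$.} Here Proposition \ref{prop:estGK} gives
\[
|\partial_xG(x,y)|\le \CVII\bigl(M(|x-y|)+G(x,y)/\dex\bigr),
\]
so it suffices to show $M(|x-y|)\lesssim G(x,y)/\dex$. By the triangle inequality, $|x-y|/2\le\dey\le 3|x-y|/2$, so $\dey\asymp|x-y|$, and Lemma \ref{GreenEst1} yields $G(x,y)\asymp V(\dex)V(|x-y|)/|x-y|$. The desired inequality reduces to $V(|x-y|)/|x-y|\lesssim V(\dex)/\dex$, which is the almost--monotonicity of $t\mapsto V(t)/t$: from $V\in\WUSC{\ua/2}{0}{\cdot}$ and $\ua<2$, one has $V(\lambda t)/(\lambda t)\le C\lambda^{\ua/2-1}V(t)/t\le CV(t)/t$ for $\lambda\ge 1$.

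\emph{Regime II: $|x-y|/2\le\dex<2|x-y|$, so $\dex\asymp|x-y|$.} The indicator vanishes, leaving only $|\partial_xG(x,y)|\le\CVII M(|x-y|)\asymp M(\dex)$, which is $\asymp G(x,y)/\dex$ when $\dey\asymp\dex$ but too large when $\dey\ll\dex$. I would complement the bound using Proposition \ref{gradParGreen} with $\eps:=|x-y|/4$. Since $\eps<\dex$ and $|x-y|>\eps$, we have $y\in A:=D\setminus B(x,\eps)$ and
\[
\partial_xG(x,y)=\int_{B(x,\eps)}\partial_xG(x,z)\,P_A(y,z)\,dz.
\]
For $z\in B(x,\eps)$, Corollary \ref{cor:estGradGreena} gives $|\partial_xG(x,z)|\le\CVIII M(|x-z|)$. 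The choice $\eps=|x-y|/4$ forces $|y-z|\asymp|x-y|\asymp\dex$ uniformly for $z\in B(x,\eps)$, and $\delta_y^A=\dey$ in the hard subregime $\dey\ll\dex$. A Poisson--kernel estimate for the $C^{1,1}$ set $A$ (Proposition \ref{PforD}, or its Ikeda--Watanabe derivation) then gives
\[
P_A(y,z)\lesssim \frac{V(\dey)}{V(\eps-|x-z|)\,\dex},
\]
and substituting $s=|x-z|$ yields
\[
|\partial_xG(x,y)|\lesssim\frac{V(\dey)}{\dex}\int_0^{\eps}\frac{M(s)}{V(\eps-s)}\,ds.
\]

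\emph{Key analytic estimate.} The integral above is $\lesssim V(\eps)/\eps$. After the substitution $s=\eps u$, the bounds $V(\eps u)\lesssim V(\eps)u^{\la_1/2}$ (from $\WLSC$ at threshold $1$ for $V$ with $\la_1>1$, extended to bounded scales by Remark \ref{remScalExp}) and $V(\eps(1-u))\gtrsim V(\eps)(1-u)^{\ua/2}$ (from $\WUSC$ of $V$) reduce the integrand to a constant multiple of $V(\eps)\eps^{-1}u^{\la_1-2}(1-u)^{-\ua/2}$, whose $u$-integral is the Beta value $B(\la_1-1,1-\ua/2)$, finite because $\la_1>1$ and $\ua<2$. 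Combining with $V(\eps)\asymp V(\dex)$ and Lemma \ref{GreenEst1}, one obtains $|\partial_xG(x,y)|\lesssim V(\dex)V(\dey)/\dex^2\asymp G(x,y)/\dex$, as needed.

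\emph{Main obstacle.} The delicate point is keeping the implicit constant in the Poisson--kernel bound for $A$ uniform in $\dex$. A naive application of Proposition \ref{PforD} to $A$ yields a constant depending on $\diam A/r_0(A)$, which can be as large as $\mathcal O(\diam D/\dex)$; the Lemma requires $\CIX$ to depend only on $(\underline{\sigma},\diam D/r_0,1\vee\diam D)$. I would handle this either by inspecting the proof of Proposition \ref{PforD} to show that the relevant geometric scales are already captured by the factors $V(\eps-|x-z|)$ and $|y-z|$ so that the apparent distortion--dependence does not leak into the final constant, or by bypassing Proposition \ref{PforD} and estimating $P_A(y,z)$ directly from Ikeda--Watanabe using only the Green--function bounds for $D$ (Lemma \ref{GreenEst1}) and the L\'evy density $\nu$.
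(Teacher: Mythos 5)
Your Regime I argument has a genuine gap. Under $\dex < |x-y|/2$ the triangle inequality gives only $\dey \le \dex + |x-y| \le 3|x-y|/2$; it gives no lower bound, and $\dey$ can be arbitrarily small (take $y$ near $\partial D$ with $x$ far away). When $\dey \ll |x-y|$, Lemma~\ref{GreenEst1} gives $G(x,y) \asymp V(\dex)V(\dey)/|x-y|$, so
\[
\frac{M(|x-y|)\,\dex}{G(x,y)} \asymp \frac{V^2(|x-y|)\,\dex}{|x-y|\,V(\dex)\,V(\dey)} \longrightarrow \infty \quad \text{as } \dey \to 0^+,
\]
since $V(\dey)\to 0$ while $V(|x-y|)$ does not. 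Hence $M(|x-y|)\lesssim G(x,y)/\dex$ is false on part of Regime I, and Proposition~\ref{prop:estGK} alone cannot yield the Lemma there. Indeed the whole purpose of Lemma~\ref{ineq:dgldx} is to repair the fact that the term $M(|x-y|)$ in Proposition~\ref{prop:estGK} does not decay as $y\to\partial D$, whereas $\partial_x G(x,y)$ does; that repair has to come from the Poisson-kernel representation, not from a triangle-inequality comparison of $\dey$ with $|x-y|$.

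The repair is already contained in your Regime II: drop the case split and run the Poisson-kernel argument for the whole range $\dex<2|x-y|$, choosing $\eps=\dex/4$ rather than $|x-y|/4$ (the latter can exceed $\dex$ once $|x-y|\ge 4\dex$, and then Proposition~\ref{gradParGreen} does not apply). This is the paper's proof: apply Proposition~\ref{gradParGreen} once with $B=B(x,\dex/4)$, estimate $\partial_x G(x,z)$ inside the integral by Proposition~\ref{prop:estGK}, dispose of the $G(x,z)/\dex$ contribution via harmonicity ($\int_B G(x,z)P_A(y,z)\,dz = G(x,y)$), and bound $\int_B M(|x-z|)P_A(y,z)\,dz$ by the Poisson kernel of $A$ together with Lemma~\ref{lem:M/V}; since $\dex\dey\lesssim|x-y|^2$ under $\dex<2|x-y|$, the resulting bound $V(\dex)V(\dey)/(|x-y|\dex)$ is $\lesssim G(x,y)/\dex$ with no restriction on $\dey$. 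Your explicit Beta-integral evaluation (finite because $\la_1>1$ and $\ua<2$) is a valid, more hands-on substitute for the paper's appeal to Lemma~\ref{lem:M/V}. Finally, the constant-tracking concern you raise is legitimate: a naive use of Proposition~\ref{PforD} on $A=D\setminus B$ introduces a dependence on $\diam(A)/r_0(A)\sim\diam(D)/\dex$, and the paper's proof asserts uniformity without spelling it out; your suggestion to bound $P_A(y,z)$ directly from Ikeda--Watanabe and $G_A\le G_D$ is a sensible way to close that point.
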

\begin{proof}
Let $B \subset D$ be any interval such that $\overline{B}\subset D$ and put $A = B^c\cap D$. For any $x \in B$ and $y \in D$ such that $x\neq y$, by Propositions \ref{gradParGreen} and \ref{prop:estGK} and harmonicity of $G$,
\begin{eqnarray*}
|\partial_x G(x,y)| &= &|\int_{B}\partial_x G(x,z)P_A(y, z)dz| \\
& \leq & \CVII\int_{B} \left( M(|x-z|) + \frac{G(x,z)}{\dex}\mathbbm{1}_{\frac{|x-z|}{2} > \dex}\right) P_A(y, z)dz\\
& \leq & \CVII\int_{B}  M(|x-z|) P_A(y, z)dz + \CVII\frac{G(x,y)}{\dex}.
\end{eqnarray*}
Therefore, it remains to estimate the integral 
\begin{equation}\label{introB}
\int_{B} M(|x-z|) P_A(y, z)dz.
\end{equation}

Let $B = B(x,\dex/4)$. By the assumption $y \not\in B$, $\dist(y, B) \ge \dex/4$ and $|y-z| \approx |x-y|$ for $z \in B$. 
Denote $\delta_{x}^A = \dist(x,\partial A)$. Note that $\dex^A \approx \dex$  and $\dey^A \approx \dey$. By Proposition \ref{PforD} and Lemmas \ref{lem:M/V}, \ref{GreenEst1}, we get
\begin{align*}
& \int_{{B}} M(|x-z|)P_A(y, z)dz \leq \CIV \int_{{B}} M(|x-z|)\frac{V(\dey^A)}{V(\dez^A)|y-z|}dz \\ & \leq  c_1\frac{V(\dey)}{|x-y|}\int_{{B}} \frac{M(|x-z|)}{V(\dez^A)}dz \leq  c_2\frac{V(\dey)V(\dex^A)}{|x-y|\dex^A} \le c_3 \frac{V(\dey)V(\dex)}{|x-y|\dex} \leq c_4 \frac{G(x,y)}{\dex}\,.
\end{align*}

Since constants $c_1-c_4$ depend on $D$ only via constants $\CXV$, $\CIV$ and $\CVI$, the proof is completed. 
\end{proof}

\begin{thm}\label{thm:GradEst}
There is a constant $\CX=\CX(\underline{\sigma},\mathrm{diam}(D)/r_0,1\vee\mathrm{diam}(D))$ such that
$$|\partial_x G(x,y)|\leq \CX\frac{G(x,y)\wedge K(|x-z|)}{|x-y|\wedge\dex},\quad x,y\in D.$$
\end{thm}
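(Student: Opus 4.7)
\emph{Plan.} The stated inequality splits into two independent estimates,
\begin{equation*}
|\partial_x G(x,y)|\;\lesssim\;\frac{G(x,y)}{\dex\wedge|x-y|}\qquad\text{and}\qquad|\partial_x G(x,y)|\;\lesssim\;\frac{K(|x-y|)}{\dex\wedge|x-y|}
\end{equation*}
(the $|x-z|$ appearing in the displayed theorem is a typo for $|x-y|$). My plan is to split on the sign of $\dex-|x-y|$ and, in each case, to obtain one of the two bounds directly from the tools already developed in the section, then to deduce the other by comparing $G(x,y)$ with $K(|x-y|)$ via Lemma \ref{GreenEst1}.

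\textbf{Case $\dex\leq|x-y|$.} Here $\dex\wedge|x-y|=\dex$, and Lemma \ref{ineq:dgldx} gives the $G$-bound immediately. For the $K$-bound it suffices to show $G(x,y)\lesssim K(|x-y|)$: the triangle inequality gives $\dey\leq 2|x-y|$, so $V(\dex)\leq V(|x-y|)$ and, by \eqref{subadd}, $V(\dey)\leq 2V(|x-y|)$; both regimes of Lemma \ref{GreenEst1} then yield $G(x,y)\lesssim V^2(|x-y|)/|x-y|\approx K(|x-y|)$ (in the regime $|x-y|^2<\dex\dey$, the constraint $\dex\leq|x-y|$ forces $\dex,\dey,|x-y|$ to be mutually comparable, so the bound is trivial).

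\textbf{Case $\dex>|x-y|$.} Since $|x-y|/2<\dex$, the indicator in Proposition \ref{prop:estGK} vanishes and $|\partial_x G(x,y)|\leq\CVII M(|x-y|)\approx K(|x-y|)/|x-y|$, giving the $K$-bound. For the $G$-bound I distinguish two sub-cases. If $\dey\geq|x-y|/2$, the WLSC of $V$ with exponent $\la_1/2>1/2$ combined with Lemma \ref{GreenEst1} yields $K(|x-y|)/G(x,y)\lesssim(|x-y|^2/(\dex\dey))^{(\la_1-1)/2}\leq 1$ in both regimes of the Green-function estimate, so the $G$-bound follows from the $K$-bound. If instead $\dey<|x-y|/2$, the $K$-bound is too weak, and I would invoke Proposition \ref{gradParGreen} with $B=B(x,|x-y|/2)\subset D$: for $z\in B$, Corollary \ref{cor:estGradGreena} gives $|\partial_x G(x,z)|\leq\CVIII M(|x-z|)$, while Proposition \ref{PforD} applied to $A=D\setminus B$ bounds $P_A(y,z)$ using $\delta_y^A=\dey$ (as $\dey<|x-y|/2$), $\delta_z^A=|x-y|/2-|x-z|$, and $|y-z|\approx|x-y|$. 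A change of variables $u=|x-z|$ and a split of the resulting one-dimensional integral at $|x-y|/4$, together with the scalings \eqref{scalV1}--\eqref{scalV2R}, produce
\begin{equation*}
|\partial_x G(x,y)|\;\lesssim\;\frac{V(\dey)}{|x-y|}\int_0^{|x-y|/2}\!\frac{M(u)\,du}{V(|x-y|/2-u)}\;\lesssim\;\frac{V(\dey)\,V(|x-y|)}{|x-y|^2}.
\end{equation*}
Comparing with $G(x,y)/|x-y|$ via Lemma \ref{GreenEst1} completes the proof: when $|x-y|^2\geq\dex\dey$ it suffices to use $V(|x-y|)\leq V(\dex)$; when $|x-y|^2<\dex\dey$, a short WLSC computation (exploiting $\la_1>1$, $\dey<|x-y|$, and $|x-y|<\sqrt{\dex\dey}$) yields $V(|x-y|)/|x-y|\lesssim V(\dex)/\sqrt{\dex\dey}$.

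\textbf{Main obstacle.} The delicate sub-case is $\dey<|x-y|/2<\dex/2$, where $y$ is much closer to $\partial D$ than $x$: the naive estimate $|\partial_x G|\lesssim M(|x-y|)$ from Proposition \ref{prop:estGK} ignores the smallness of $V(\dey)$ that forces $G(x,y)$ to be small in this regime, and this factor has to be recovered from the Poisson-kernel representation. The hypothesis $\la_1>1$ enters both in estimating the one-dimensional integral (whose integrand is singular like $u^{\la_1-2}$ at $0$, integrable precisely because $\la_1>1$, while the endpoint singularity is controlled by the upper scaling $\ua<2$) and in the final WLSC comparison between $V(|x-y|)/|x-y|$ and $V(\dex)/\sqrt{\dex\dey}$.
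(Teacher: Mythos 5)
Your proof is correct, but it is considerably longer than the paper's, and the ``main obstacle'' you identify is in fact no obstacle at all. You first agree with the paper that the $|x-z|$ in the theorem is a typo for $|x-y|$.

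The paper's proof is a three-line reduction. The $K$-bound holds everywhere: Corollary~\ref{cor:estGradGreena} gives $|\partial_xG(x,y)|\le\CVIII M(\dex\wedge|x-y|)\lesssim K(\dex\wedge|x-y|)/(\dex\wedge|x-y|)\le K(|x-y|)/(\dex\wedge|x-y|)$ by \eqref{est:KbyV}, \eqref{MappK}, and monotonicity of $K$. The $G$-bound holds when $\dex<2|x-y|$ by Lemma~\ref{ineq:dgldx} (and then $G/\dex\le G/(\dex\wedge|x-y|)$). The only thing left is the $G$-bound when $|x-y|\le\dex/2$; there $\dex\approx\dey$, so $G(x,y)\approx V^2(\dex)/\dex$ by Lemma~\ref{GreenEst1}, and the almost-monotonicity of $s\mapsto V^2(s)/s$ (which is where $\la_1>1$ enters) gives $G(x,y)/|x-y|\ge cM(|x-y|)$, i.e.\ the $K$-bound implies the $G$-bound.

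Your case $\dex\le|x-y|$ matches this and is fine. Your sub-case $\dey\ge|x-y|/2$ of $\dex>|x-y|$ is a correct (if slightly more involved) way of matching $K$ and $G$ and is also fine. The real issue is the sub-case $\dey<|x-y|/2<\dex/2$. You call this the delicate regime and invoke Proposition~\ref{gradParGreen} with $B=B(x,|x-y|/2)$, Proposition~\ref{PforD}, and a one-dimensional integral estimate. But the triangle inequality $\dey\ge\dex-|x-y|$ shows that $\dey<|x-y|/2$ together with $\dex>|x-y|$ forces $\dex<\tfrac{3}{2}|x-y|<2|x-y|$, so Lemma~\ref{ineq:dgldx} already applies and gives $|\partial_xG(x,y)|\le\CIX G(x,y)/\dex\le\CIX G(x,y)/(\dex\wedge|x-y|)$ in one line. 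Your Poisson-kernel computation is therefore a re-derivation of (a special case of) Lemma~\ref{ineq:dgldx} rather than new content. A further small slip: in that sub-case the regime $|x-y|^2<\dex\dey$ of Lemma~\ref{GreenEst1} that you treat at the end is actually empty, since $\dex\dey<\tfrac{3}{2}|x-y|\cdot\tfrac{1}{2}|x-y|<|x-y|^2$, so the ``short WLSC computation'' you allude to is vacuous. None of this makes the proof wrong, but it does obscure the simple structure the paper exploits: one should split on $|x-y|\le\dex/2$ versus $\dex<2|x-y|$ so that the interior case automatically has $\dex\approx\dey$, and the boundary case is already covered by the existing derivative lemma.
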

\begin{proof}
Due to Corollary \ref{cor:estGradGreena}, Lemma \ref{ineq:dgldx}, \eqref{est:KbyV} and \eqref{MappK} it remains to prove existing of a constant $c$ such that 
$$
\frac{G(x,y)}{|x-y|}\geq c M(|x-y|),
$$
when $|x-y|\leq\dex/2$. But in this case $\dex\approx\dey$ and therefore, by Lemma \ref{GreenEst1}, 
$$G(x,y)\stackrel{C_2}{\approx}\frac{V^2(\dex)}{\dex}.$$
Since $\la_1>1$, by  \eqref{scalV2}, we obtain that $s\mapsto V^2(s)/s$ is almost increasing (bounded from below by an increasing function). Hence, we get the claim.
\end{proof}

We end this section we the proof  of the uniform intergability of $\partial_z G(z,y)$.

\begin{lem}\label{GDUnif} The function $\partial_z G(z, y)$ is uniformly in $y$ integrable against $|b(z)|dz$.
\end{lem}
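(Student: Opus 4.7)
The strategy is to split every candidate set $E \subset D$ into three regions and control each using the gradient bound from Corollary~\ref{cor:estGradGreena}, $|\partial_z G(z,y)| \leq \CVIII M(\dez \wedge |z-y|)$. Since $M$ is decreasing, this estimate decouples the two sources of singularity of $\partial_z G(z,\cdot)$: the pole at $z=y$ (where $|z-y|$ is tiny) and the boundary $\partial D$ (where $\dez$ is tiny). Moreover, because $M(r) \approx K(r)/r$ for small $r$, by \eqref{est:KbyV} and \eqref{eq:Kc} the Kato condition is equivalent to $\lim_{r \to 0^+} \sup_{x \in \RR} \int_{B(x,r)} M(|x-z|)\, |b(z)|\,dz = 0$, and this is the only quantitative input one needs from $b \in \pK_1$.

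Given $\epsilon > 0$, I plan to produce $\delta > 0$ such that $\int_E |b|\,dz < \delta$ forces $\sup_y \int_E |\partial_z G(z,y)|\,|b(z)|\,dz < \epsilon$. First, choose $r_1 > 0$ so that the Kato supremum over balls of radius $r_1$ is less than $\epsilon/(3\CVIII)$. Using that $\partial D$ is a finite set of $N$ points (since $D$ is a finite union of open intervals), choose $r_2 \in (0, r_1)$ so that the same supremum over balls of radius $r_2$ is less than $\epsilon/(3N\CVIII)$. Decompose $E = E_1 \cup E_2 \cup E_3$, where $E_1 = E \cap B(y, r_1)$, $E_2 = (E \setminus E_1) \cap \{\dez < r_2\}$ and $E_3 = E \setminus (E_1 \cup E_2)$. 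On $E_1$ the bound $M(\dez \wedge |z-y|) \leq M(|z-y|)$ and the choice of $r_1$ give $\int_{E_1}|\partial_z G||b|\,dz < \epsilon/3$. On $E_2$ the exclusion of $E_1$ and the inequality $r_2 < r_1$ force $|z-y| \geq r_1 > r_2 > \dez$, so $M(\dez \wedge |z-y|) = M(\dez)$; writing $\{\dez < r_2\} \cap D \subset \bigcup_{p \in \partial D} B(p, r_2)$ with $\dez = |z-p|$ on each piece, the integral splits into at most $N$ Kato-type contributions and is bounded by $N \cdot \epsilon/(3N\CVIII)$, yielding $\int_{E_2} |\partial_z G||b|\,dz < \epsilon/3$. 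On $E_3$ both $|z-y| \geq r_1$ and $\dez \geq r_2$, hence $M(\dez \wedge |z-y|) \leq M(r_2)$ is a uniform constant and $\int_{E_3} |\partial_z G||b|\,dz \leq \CVIII M(r_2) \int_E |b|\,dz$; since a finite covering of the bounded set $D$ by balls of radius $r_1$ combined with $M(|x-z|) \geq M(r_1)$ on each ball and the Kato bound shows $b \in L^1(D)$, absolute continuity of the Lebesgue integral provides the required $\delta$.

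The main technical point is the boundary strip $E_2$: it is the place where the one-dimensional geometry is decisively used. Because $\partial D$ consists of finitely many points, the set $\{\dez < r_2\}$ splits cleanly into a union of balls centred at the endpoints, on each of which the Kato condition applies verbatim. In higher dimensions one would need a covering adapted to the $C^{1,1}$ boundary and a more delicate Hardy-type estimate, but in the present setting everything reduces to the Kato condition together with the elementary geometry of the endpoints of $D$.
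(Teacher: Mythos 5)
Your overall structure mirrors the paper's: both arguments rest on Corollary~\ref{cor:estGradGreena} together with the $M$-form \eqref{eq:Kc} of the Kato condition, and both split the region of integration into a piece near $y$, a piece near $\partial D$, and a remainder. The only real difference in strategy is that you verify the ``small sets'' characterization of uniform integrability (plus $b\in L^1(D)$), while the paper directly establishes the de~la~Vall\'ee-Poussin criterion $\lim_{N\to\infty}\sup_y\int_{\{|\partial_zG|>N\}}|\partial_zG|\,|b|\,dz=0$; those are equivalent routes and neither is substantively simpler.

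There is, however, a genuine gap in your estimate on $E_1$. You write $M(\dez\wedge|z-y|)\le M(|z-y|)$, but $M$ is \emph{decreasing} (the paper states this explicitly below the definition of $M$), so for $\dez<|z-y|$ the inequality is reversed: $M(\dez\wedge|z-y|)=M(\dez)\ge M(|z-y|)$. Points $z\in E_1=E\cap B(y,r_1)$ can be simultaneously close to $y$ and even closer to $\partial D$, and there your bound misses the dominant singularity. The fix is the inequality
\begin{align*}
M(\dez\wedge|z-y|)\le M(\dez)\mathbbm{1}_{\{\dez\le|z-y|\}}+M(|z-y|)\mathbbm{1}_{\{|z-y|<\dez\}},
\end{align*}
which on $E_1$ forces $\dez<r_1$ in the first term and $|z-y|<r_1$ in the second, so both are controlled by the Kato quantity for balls of radius $r_1$ (the first after covering $\{\dez<r_1\}$ by at most $N$ balls around the endpoints of $D$, exactly as you do for $E_2$). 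In effect you need to treat $E_1$ the same way you treat $E_2$, not shortcut it with a single-sided bound; this is precisely what the paper's inclusion $\{|\partial_zG|>N\}\subset\{\dez<r_N\}\cup\{|z-y|<r_N\}$ accomplishes. Once this is repaired your $E_2$ and $E_3$ arguments, including the verification that $b\in L^1(D)$ via a finite covering, are correct and the proof goes through.
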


\begin{proof}

It is enough to show that
\begin{equation*}
\lim_{N\to \infty}\sup_{y\in\R}\int\limits_{|\partial_z G(z, y)| > N} |\partial_z G(z,y)||b(z)|dz = 0.
\end{equation*}
Let $N>0$ and $r_N = \inf \lbrace r >0  \colon M(r) \le N/\CVIII \rbrace \land r_0$. Note that $\lim_{r\to 0} M(r) =\infty$, hence $r_N \to 0$ as $N \to \infty$.  Fix $y \in \R$ and take $N$ such that $r_N \le r_0$. By \eqref{estGradGreena},  $\lbrace z : |\partial_z G(z, y)| > N\rbrace  \subset \lbrace z : M(\dez) > N/\CVIII \rbrace \cup \lbrace z : M(|z-y|) > N/\CVIII\rbrace \subset  \lbrace z : \dez < r_N \rbrace \cup \lbrace z : |y-z| < r_N \rbrace$. We may assume that the set $D$ is an union of $k$ distinctive intervals. By Proposition \ref{prop:estGK} and monotonicity of $M(\cdot)$, we have

\begin{align}
& \int\limits_{|\partial_z G(z, y)| > N} |\partial_z G(z, y)||b(z)|\,dz 
\\&\leq \CVIII\left(
\int\limits_{\dez < r_N}M(\dez)|b(z)|\,dz + 
\int\limits_{|z-y| < r_N}M(|z-y|)|b(z)|\,dz
\right) \nonumber 
\leq (2k+1)\CVIII K_{r_N},\label{unifforM}
\end{align}
where
$$K_r = \sup_{y\in\R}\int\limits_{B(y,r)}M(|y-z|)|b(z)|\,dz.$$
By \eqref{eq:Kc}, $\lim\limits_{N\to \infty}K_{r_N} = 0$, which completes the proof. 
\end{proof}

\subsection{3G  inequalities}

Now, we apply the estimates of the Green function and its derivative to obtain the following $3G$-type inequalities.

\begin{prop}\label{lem:3G}
There is a constant $\CXI=\CXI(\underline{\sigma}, 1\vee \mathrm{diam(D)})$ such that
$$
\frac{ G(x,z) G(z,y)}{ G(x,y)}\leq \CXI V(\dez) \(\frac{ G(x,z)}{V(\dex)}\vee \frac{ G(z,y)}{V(\dey)}\right).
$$
\end{prop}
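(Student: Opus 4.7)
My plan is to reduce the desired 3G-type inequality to a purely geometric ``quasi-triangle'' inequality for the Green shape function, using the explicit comparability from Lemma~\ref{GreenEst1}. Define
\begin{equation*}
\Phi(u,v) := \sqrt{\delta_u \delta_v} \vee |u-v|,
\end{equation*}
so that Lemma~\ref{GreenEst1} reads $G(u,v) \stackrel{\CII}{\approx} V(\delta_u)V(\delta_v)/\Phi(u,v)$ for $u,v\in D$. Plugging this into both sides of the claimed inequality, the factors $V(\delta_x)$ and $V(\delta_y)$ cancel between numerator and denominator and the powers of $V(\delta_z)$ match up, so the LHS is comparable (up to a constant of order $\CII^4$) to $V(\delta_z)^2 \Phi(x,y)/\bigl(\Phi(x,z)\Phi(z,y)\bigr)$ and the RHS to $V(\delta_z)^2/\min\bigl(\Phi(x,z),\Phi(z,y)\bigr)$. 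The claim is therefore equivalent to showing that there exists an absolute constant $c$ with
\begin{equation*}
\Phi(x,y)\ \leq\ c\,\max\bigl(\Phi(x,z),\Phi(z,y)\bigr).
\end{equation*}

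To establish this quasi-triangle inequality, I would bound the two ingredients of $\Phi(x,y)=\sqrt{\delta_x\delta_y}\vee|x-y|$ separately. The ordinary triangle inequality yields $|x-y|\leq|x-z|+|z-y|\leq 2\max\bigl(\Phi(x,z),\Phi(z,y)\bigr)$ at once. For the geometric-mean term $\sqrt{\delta_x\delta_y}$, assume without loss of generality $\delta_x\leq\delta_y$ and split on the size of $\delta_z$. If $\delta_z\geq\delta_x/2$, use the geometric-mean slot of $\Phi$ to write $\Phi(z,y)\geq\sqrt{\delta_z\delta_y}\geq\sqrt{\delta_x\delta_y/2}$. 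If instead $\delta_z<\delta_x/2$, then also $\delta_z<\delta_y/2$, and the $1$-Lipschitz property of $\delta(\cdot)=\dist(\cdot,D^c)$ on $\R$ gives $|z-y|\geq\delta_y-\delta_z\geq\delta_y/2\geq\sqrt{\delta_x\delta_y}/2$, whence $\Phi(z,y)\geq\sqrt{\delta_x\delta_y}/2$. Either way $\sqrt{\delta_x\delta_y}\leq\sqrt{2}\max\bigl(\Phi(x,z),\Phi(z,y)\bigr)$, and combining the two bounds yields the quasi-triangle inequality with $c=2$.

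The only step that requires genuine thought is the handling of the geometric-mean term $\sqrt{\delta_x\delta_y}$, which has no direct analogue in the usual triangle inequality and must be controlled via the case split above; the $1$-Lipschitz property of $\delta$ is exactly what makes the small-$\delta_z$ case work. Everything else is bookkeeping: tracking the comparability constants from Lemma~\ref{GreenEst1} yields a constant $\CXI$ of the form stated.
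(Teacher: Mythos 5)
Your proof is correct and follows essentially the same route as the paper: both reduce the $3G$ bound via Lemma~\ref{GreenEst1} to the quasi-triangle inequality $\Phi(x,y)\leq 2\max\bigl(\Phi(x,z),\Phi(z,y)\bigr)$ and establish it by splitting on whether $\delta_z$ is at least half of $\delta_x\wedge\delta_y$, using the $1$-Lipschitz property of $\delta$ in the small-$\delta_z$ case (the paper phrases this with $\mathcal{G}(u,v)=G(u,v)/(V(\delta_u)V(\delta_v))$ and an algebraic rewrite rather than naming $\Phi$ explicitly). The only slip is cosmetic: in the small-$\delta_z$ branch you obtain $\sqrt{\delta_x\delta_y}\leq 2\Phi(z,y)$, not $\sqrt{2}\Phi(z,y)$, but your stated final constant $c=2$ already accounts for this.
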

\begin{proof}
For $x, y \in D$, we define
$$
\CG(x,y) = \frac{G(x,y)}{V(\dex)V(\dey)}.
$$
It suffices to prove that for any $x,y,z \in D$, we have
$$
\CG(x,z)\wedge\CG(z,y)\leq c\,\CG(x,y).
$$

By Lemma \ref{GreenEst1},
\begin{align*}
\CII^{-1} \left( \CG(x,z)\wedge\CG(z,y) \right)&\leq \frac{1}{\(\dex\dez\r)^{1/2}}\wedge\frac{1}{|x-z|}\wedge\frac{1}{\(\dez\dey\r)^{1/2}}\wedge\frac{1}{|z-y|}\\
&=\frac{1}{\(\dex\dey\r)^{1/2}}\(\frac{\dex\wedge\dey}{\dez}\r)^{1/2}\wedge\frac{1}{|x-z|\vee|z-y|}
\end{align*}
If $\frac{\dex\wedge\dey}{\dez}\leq 2$, Lemma \ref{GreenEst1} imply
$$ \CG(x,z)\wedge\CG(z,y) \leq 2\CII \left(\frac{1}{\(\dex\dey\r)^{1/2}}\wedge\frac{1}{|x-y|}\right)\leq 2\CII^2\CG(x,y).$$

If $\frac{\dex\wedge\dey}{\dez}\geq 2$, then 
$\sqrt{\dex\dey}\leq $
$\dex\vee\dey\leq 2\left(|x-z|\vee|y-z|\right)$ and in consequence 
\begin{align*}
\CG(x,z)&\wedge\CG(z,y)\leq  \frac{\CII}{|x-z|\vee|z-y|}\leq 2\CII^2\CG(x,y).
\end{align*}
\end{proof}

\begin{lem}\label{est:GGdG}
There is a constant $\CXII=\CXII(\underline{\sigma},\mathrm{diam(D)}/r_0,1\vee \mathrm{diam(D)})$ such that for any $x,y,z\in D$, we have
$$
\frac{G(x,z)|\partial_zG(z,y)|}{G(x,y)} \leq \CXII M(\dez \wedge |y-z|).
$$
\end{lem}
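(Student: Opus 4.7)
The plan is to combine Theorem \ref{thm:GradEst}, which gives the pointwise bound on $|\partial_z G(z,y)|$, with the Green function estimate of Lemma \ref{GreenEst1}. Write $a=\dex$, $b=\dey$, $c=\dez$, $p=|x-y|$, $q=|x-z|$, $s=|y-z|$, $r=c\wedge s$, and introduce $\rho(u,v)=|u-v|\vee\sqrt{\delta_u\delta_v}$, so Lemma \ref{GreenEst1} reads $G(u,v)\approx V(\delta_u)V(\delta_v)/\rho(u,v)$. Theorem \ref{thm:GradEst} gives
$$|\partial_z G(z,y)|\leq \CX\,\frac{G(z,y)\wedge K(s)}{s\wedge c}.$$

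I would split according to the size of $s$ relative to $c$.

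\textbf{Case 1: $s\leq c/2$.} Then $b\geq c-s\geq c/2$, so by the scaling in \eqref{scalV1} we have $V(c)/V(b)\leq C$. Using the $K$--part of the min together with \eqref{est:KbyV}, $K(s)/s\approx V^2(s)/s^2=M(s)=M(r)$, so $|\partial_z G(z,y)|\leq C M(r)$. It is enough to show $G(x,z)\leq C G(x,y)$, and by Lemma \ref{GreenEst1} this reduces to $\rho(x,y)\leq C\rho(x,z)$. This is the 1D geometric content: from $p\leq q+s$, $|a-c|\leq q$, $b\leq 2c$ and $\sqrt{ab}\leq \sqrt{2ac}$ one gets $\rho(x,y)\leq q+s+\sqrt{2ac}$. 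If $q\geq\sqrt{ac}$ then $\rho(x,z)=q$ and the inequality is immediate (using $s\leq c/2$), while if $q<\sqrt{ac}$ then $c-q\leq a$ together with $q^2<ac$ gives the quadratic $a^2-3ac+c^2<0$ and hence $a\approx c$, so $\rho(x,z)=\sqrt{ac}\approx c$ again dominates.

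\textbf{Case 2: $s>c/2$.} Here $r\approx c$ when $s\leq 2c$ and $r=c$ when $s>2c$. I use the $G(z,y)$ part of the min: $|\partial_z G(z,y)|\leq C G(z,y)/r$. By Lemma \ref{GreenEst1} the factor $V(b)$ cancels between $G(x,z)/G(x,y)$ and $G(z,y)$ (which is crucial since $b$ may be very small here), producing
$$\frac{G(x,z)|\partial_z G(z,y)|}{G(x,y)}\leq \frac{C V^2(c)\,\rho(x,y)}{r\,\rho(x,z)\,\rho(z,y)}.$$
The target $C M(r)=CV^2(r)/r^2$ is therefore equivalent to $V^2(c)\,r\,\rho(x,y)\leq C V^2(r)\,\rho(x,z)\rho(z,y)$. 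When $c/2<s\leq 2c$ one has $V(c)\approx V(r)$ by scaling and $\rho(z,y)\geq s\geq r/2$, so it again reduces to $\rho(x,y)\leq C\rho(x,z)$, handled as in Case 1. When $s>2c$ we have $\rho(z,y)=s$ (because $\sqrt{bc}\leq\sqrt{2sc}\leq s$), $r=c$, and the needed inequality becomes $c\,\rho(x,y)\leq C s\,\rho(x,z)$; this follows from $p\leq q+s$ and the same 1D dichotomy on whether $q\geq \sqrt{ac}$ (then $q\geq 0.618\,c$ so $c/q$ is bounded) or $q<\sqrt{ac}$ (then $a\approx c$ and $\sqrt{ac}\approx c$ dominates).

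The main obstacle is the 1D geometry powering every subcase: the rigid link $|a-c|\leq q$ forbids independent smallness of $a$ and $q$ once one of $\sqrt{ac}$, $q$ is chosen to dominate $\rho(x,z)$, and this is what ultimately keeps the ratio $\rho(x,y)/\rho(x,z)$ bounded by a constant depending only on $\underline\sigma$, $\diam(D)/r_0$ and $\diam(D)\vee 1$.
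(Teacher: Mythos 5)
Your proof is correct, but it takes a genuinely different route from the paper's. The paper's own proof is modular: it first establishes the 3G-type Proposition~\ref{lem:3G}, from which it derives $\frac{G(x,z)G(z,y)}{G(x,y)} \le c\,V(\dez)^2/\dez$ (inequality~\eqref{ineq:3G}), and then simply multiplies this by the gradient bounds from Lemma~\ref{ineq:dgldx} (for $\dez<2|y-z|$) and Corollary~\ref{cor:estGradGreena} (for $\dez\ge 2|y-z|$). The $V(\dey)$ cancellation that you flag as crucial is exactly what Proposition~\ref{lem:3G} encapsulates. Your argument instead invokes the unified Theorem~\ref{thm:GradEst} and re-derives the geometric content of the 3G inequality by hand from Lemma~\ref{GreenEst1}, via the dichotomy $q\ge\sqrt{ac}$ versus $q<\sqrt{ac}$ (including the quadratic $a^2-3ac+c^2<0$ forcing $a\approx c$). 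The case split is the same as the paper's up to relabeling. What the paper's route buys is brevity and reuse of Proposition~\ref{lem:3G}; what yours buys is self-containment, at the cost of several explicit 1D subcases. One small exposition wrinkle: your Case~1 estimate $\rho(x,y)\le C\rho(x,z)$ is stated under $s\le c/2$ but reused in Case~2 with only $s\le 2c$; the argument does go through under the weaker hypothesis (only the numerical constants change, since $q\ge 0.618\,c$ still gives $s\le 2c\lesssim q$, and $b\le 3c$ rather than $b\le 2c$), but it would be cleaner to prove the reduction once under $s\le 2c$ and cite it in both places.
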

\begin{proof}
Note that $\dez^2 \le 4(\de_x\de_z \vee |x-z|^2) $, hence, 
\begin{equation}\label{est:gGg}
\frac{V(\dez)G(x,z)}{V(\dex)} \approx V(\dez)^2\(\frac{1}{(\de_x\de_z)^{1/2}}\wedge\frac{1}{|x-z|}\) \leq 2\frac{V(\dez)^2}{\de_z}.
\end{equation}
By Proposition \ref{lem:3G} and \eqref{est:gGg},
\begin{align}\label{ineq:3G}
    	\frac{G(x,z)G(z,y)}{G(x,y)} 
    	 \leq   \CXI\(\frac{V(\dez)G(x,z)}{V(\dex)}\right)\vee\(\frac{V(\dez)G(z,y)}{V(\dey)}\right)  
    	\leq  c_1\frac{V(\dez)^2}{\de_z}, 
\end{align}
where $c_1 = 2\CII\CXI$. 
For $\dez<2|y-z|$, by Lemma \ref{ineq:dgldx} and \eqref{ineq:3G}, we get
$$
\frac{G(x,z)|\partial_zG(z,y)|}{G(x,y)} \leq \CIX\frac{G(x,z)}{G(x,y)}\frac{G(z,y)}{\dez}\leq c_2 M(\dez), 
$$
where $c_2 = c_1\CIX$. 
Now, let $\dez\ge2|z-y|$. Note  that $\dez \approx \dey$ and in consequence $G(z,y) \approx V^2(\dez)/\dez$. Hence, by (\ref{estGradGreena}) and (\ref{ineq:3G}), we have
\begin{align*}
  \frac{G(x,z)|\partial_zG(z,y)|}{G(x,y)} &\leq \CVIII \frac{G(x,z)}{G(x,y)}M(|z-y|) \leq \frac{c_3V^2(\dez)}{G(z,y)\dez}M(|z-y|) \leq c_4M(|z-y|),
\end{align*}
where $c_3=c_1\CVIII$ and $c_4=c_3\left(\frac{2\CI}{\underline{c}}\right)\sqrt{\frac{3}{2}}$. Now,	 by \eqref{subadd}, the assertion of the lemma holds.
\end{proof}

For $x, y \in D$, we define
\begin{equation}\label{def:kappa}
\kappa(x,y) = \int\limits_D\left|b(z)\frac{ G(x,z) \partial_zG(z,y)}{G(x,y)}\right|dz,
\end{equation}
\begin{lem}\label{lem:boundKappa}
Let $\lambda <\infty, R < 1$. There is a constant 
$\CXIII = \CXIII(\underline{\sigma}, b, \lambda, R)$  
such that if $\diam(D)/r_0(D) \leq \lambda$ and $\diam(D)\leq R$, then 
\begin{equation}\label{eq:kappa}
\kappa(x,y) \leq \CXIII, \qquad x, y \in D.
\end{equation}
Furthermore, 
$\CXIII \rightarrow 0$ as $R\rightarrow 0$. 
\end{lem}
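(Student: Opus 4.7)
The plan is to reduce $\kappa(x,y)$ to a sum of Kato-type integrals and then use \eqref{eq:Kc} together with the bound on the number of intervals imposed by $\diam(D)/r_0(D)\leq \lambda$.

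First I would apply Lemma~\ref{est:GGdG} directly inside the definition \eqref{def:kappa} to obtain
\[
\kappa(x,y)\ \leq\ \CXII \int_D M(\dez \wedge |y-z|)\,|b(z)|\,dz.
\]
Because $M$ is non-increasing, $M(a\wedge b)=M(a)\vee M(b)\leq M(a)+M(b)$, so the problem splits into bounding $I_1:=\int_D M(|y-z|)|b(z)|\,dz$ and $I_2:=\int_D M(\dez)|b(z)|\,dz$. The term $I_1$ is immediate: since $M(r)\approx V^2(r)/r^2$ and $D\subset B(y,\diam D)\subset B(y,R)$, by the Kato condition \eqref{eq:Kc} we have $I_1\leq \sup_{w\in \R}\int_{B(w,R)}\frac{V^2(|w-z|)}{|w-z|^2}|b(z)|\,dz$, which is finite and tends to $0$ as $R\to 0$.

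The main work lies in $I_2$, where $\dez$ is the distance to the boundary of $D$. Here I would use the geometry of $D$. Since each component of $D$ has length $\geq r_0$ and consecutive components are separated by $\geq r_0$, the number $n$ of intervals in $D$ satisfies $(2n-1)r_0\leq \diam D\leq \lambda r_0$, hence $n\leq (\lambda+1)/2$, and the boundary $\partial D$ has at most $N\leq \lambda+1$ points. For each $z\in D$ pick a closest point $p(z)\in\partial D$, so that $\dez=|z-p(z)|$. Decomposing $D$ according to the value of $p(z)$ (breaking ties arbitrarily) and using $|z-p(z)|\leq \diam D\leq R$, I get
\[
I_2\ \leq\ \sum_{p\in\partial D} \int_{D\cap\{p(z)=p\}} \frac{V^2(|z-p|)}{|z-p|^2}\,|b(z)|\,dz\ \leq\ N\,\sup_{w\in\R}\int_{B(w,R)} \frac{V^2(|w-z|)}{|w-z|^2}\,|b(z)|\,dz.
\]
Again by \eqref{eq:Kc} the right-hand side is finite for each fixed $R<1$ and vanishes as $R\to 0$.

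Combining the two estimates gives $\kappa(x,y)\leq \CXII(I_1+I_2)\leq \CXII(\lambda+2)\,\omega_b(R)$, where $\omega_b(R):=\sup_{w\in\R}\int_{B(w,R)} V^2(|w-z|)|w-z|^{-2}|b(z)|\,dz\to 0$ as $R\to 0$, which proves both \eqref{eq:kappa} and the asserted decay of $\CXIII$. The only non-routine step is the decomposition of $D$ that replaces the distance-to-boundary by distance to a fixed point of $\partial D$; everything else is a direct application of Lemma~\ref{est:GGdG} and the Kato condition.
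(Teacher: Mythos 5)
Your proof is correct and takes essentially the same approach as the paper: the paper's own proof of Lemma~\ref{lem:boundKappa} is a one-liner (``Since $b\in\pK_1$, \eqref{eq:kappa} follows by Lemma~\ref{est:GGdG}''), and your argument supplies exactly the implicit details, mirroring the decomposition the authors do spell out in the proof of Lemma~\ref{GDUnif}, where the factor $(2k+1)$ plays the same role as your bound $N+1\leq\lambda+2$ on the number of boundary points plus the pole at $y$.
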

\begin{proof}
Since $b \in \mathcal{K}_1$, \eqref{eq:kappa} follows by Lemma \ref{est:GGdG}.
\end{proof}

\section{Green function of $\tL$}

Following \cite{MR2283957} and \cite{MR2876511}
we recursively define,
for $t>0$ and $x,y \in \R$, 
\begin{align*}
p_0(t,x,y)  &=  p(t,x,y)\,, \\ 
 p_n(t,x,y)  &=  \int_0^t \int_{\R} p_{n-1}(t-s,x,z) b(z) \partial_z p(s,z,y)\,dz\,ds\,,\quad n \ge 1\,,
\end{align*}
and we let
\begin{equation}
\tp(t,x,y)=\sum_{n=0}^\infty p_n(t,x,y)\,.
\end{equation}
By \cite[Theorem 1.1]{MR2876511} the series converges absolutely, $\tp$ is a continuous  probability transition density function,  and
\begin{equation}\label{ptxy_comp}
  c_T^{-1} p(t,x,y) \le \tp(t,x,y) \le c_Tp(t,x,y)\,,\qquad x,y \in \R\,,\; 0<t<T\,,
\end{equation}
where $c_T \to 1$ if $T \to 0$, see \cite[Theorem 2]{MR2283957}.

By Chapman-Kolmogorov equation, there is $\CXIV>0$ such that
\begin{equation}
\label{hkfreecomp}\CXIV^{-1-t} p(t,x-y)\leq \tp(t,x,y)\leq \CXIV^{t+1} p(t,x-y), \quad  t>0,\, x,y\in \RR.
\end{equation}

We let $\tPP$, $\tEE$ be the Markov distributions and expectations defined by transition density $\tp$ on the canonical path space.
By Hunt formula,
\begin{equation}\label{eq:Hunt}
\tp_D(t,x,y) = \tp(t,x,y) - \tEE^x\left[\tau_D < t;\; \tp(t-\tau_D,  X_{\tau_D},y)\right]\,.
\end{equation}

Except symmetry, $\tp_D$ has analogous properties as $p_D$, i.e. the Chapman-Kolmogorov equation holds
$$
\int_{\RR^d} \tp_D(s,x,z)\tp_D(t,z,y)dz=\tp_D(s+t,x,y)\,,\quad s,t>0 ,\,
x,y\in \R,
$$
$0\leq \tp_D(t,x,y)\leq \tp(t,x,y)$ and $\tp_D$ is jointly continuous on $(0,\infty)\times D \times D$.

We denote by $\tG_D(x,y)$ the Green function 
of $\tilde{\mathcal{L}} = \mathcal{L}+b \partial$ on $D$,
\begin{align}\label{eq:deftG}
	\tG_D(x,y)&=\int_0^\infty \tp_D(t,x,y)dt\,.
\end{align}
As for $G$, from now on, every time we will mention the Green function $\tG$, it should be understand as a Green function of $\tilde{\mathcal{L}}$ on $D$, and then $\tG = \tG_D$.

By Blumenthal's 0-1 law and \eqref{hkfreecomp},  $\tp_D(t,x,y)=0$ and $\tG(x,y)=0$ if $x\in D^c$ or $y\in D^c$.
By (\ref{ptxy_comp}), we have 
$$
\lim_{t \to 0} \frac{\tp(t,x,y)}{t} = \lim_{t \to 0}
\frac{p(t,x,y)}{t} = \nu(y-x)\,.
$$
Thus the intensity of jumps of the canonical process $X_t$ under $\tilde{\PP}^x$ is the same
as under $\PP^x$. Accordingly, we obtain the following description.

\begin{lem}\label{lem:lsgp}
The $\tPP^x$-distribution of $(\tau_D,X_{\tau_D})$ on $(0,\infty)\times (\overline{D})^c$ has density
\begin{equation}
  \label{eq:IWft}
  \int_D \tp_D(u,x,y)\nu(z-y)\,dy\,,\quad u>0\,,\;\dez>0\,.
\end{equation}
\end{lem}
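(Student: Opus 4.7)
The plan is to verify this Ikeda--Watanabe type formula by exploiting the fact that the gradient perturbation $b\cdot\nabla$ contributes only a drift to the semimartingale decomposition and does not alter the jump intensity of $X$. The paper already observes (just before the lemma) that $\lim_{t\to 0^+}\tp(t,x,y)/t = \nu(y-x)$, which identifies the Lévy jump kernel of the canonical process under $\tPP^x$ with the original kernel $\nu$. The strategy is therefore to mimic the classical derivation of the Ikeda--Watanabe formula using this Lévy system together with Hunt's formula \eqref{eq:Hunt}.

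Concretely, for any non-negative Borel function $f$ supported on $(0,\infty)\times \overline{D}^c$, I would compute $\tEE^x f(\tau_D, X_{\tau_D})$ by the compensation formula for jumps of the canonical process under $\tPP^x$:
\begin{equation*}
\tEE^x f(\tau_D, X_{\tau_D}) = \tEE^x \sum_{0<s\le \tau_D} f(s,X_s)\,\ind_{\overline{D}^c}(X_s) = \tEE^x \int_0^{\tau_D}\!\!\int_{\overline{D}^c} f(s,z)\,\nu(z-X_s)\,dz\,ds.
\end{equation*}
Then, using the Markov property under $\tPP^x$ together with the killed transition density $\tp_D$, Fubini's theorem yields
\begin{equation*}
\tEE^x f(\tau_D, X_{\tau_D}) = \int_0^\infty\!\!\int_{\overline{D}^c} f(u,z) \left( \int_D \tp_D(u,x,y)\,\nu(z-y)\,dy \right)\!dz\,du,
\end{equation*}
and since this holds for all such $f$, the joint distribution of $(\tau_D, X_{\tau_D})$ on $(0,\infty)\times \overline{D}^c$ has density as claimed.

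The main obstacle is the rigorous justification of the Lévy system for the perturbed process, i.e.\ that under $\tPP^x$ the compensator of the jump measure of the canonical process is still $\nu(z-X_s)\,dz\,ds$. The cleanest route is to appeal to the perturbation-series construction from \cite{MR2876511, MR2283957}: the recursion defining $p_n$ only convolves $p(t,x,y)$ with the continuous drift factor $b(z)\,\partial_z p(s,z,y)$, so it contributes only an absolutely continuous part to the transition kernel, leaving the jump intensity unchanged at every level of the series. One may verify this either through a direct computation of $\lim_{t\to 0^+} t^{-1}\int_A \tp(t,x,y)\,dy = \int_A \nu(y-x)\,dy$ for relatively compact $A\subset \overline{\{x\}}^c$ (which upgrades the pointwise identity already mentioned to the required vague convergence of measures), or by checking the associated martingale problem. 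Once the Lévy system is in force, the compensation formula and Fubini steps above are routine, and the use of $\tp_D$ in place of $\tp$ is justified by Hunt's formula \eqref{eq:Hunt} exactly as in the unperturbed case.
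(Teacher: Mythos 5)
Your proposal follows essentially the same route as the paper. The paper states the lemma with no explicit proof: immediately preceding it, it computes $\lim_{t\to 0^+}\tp(t,x,y)/t = \nu(y-x)$ from \eqref{ptxy_comp}, observes that "the intensity of jumps of the canonical process $X_t$ under $\tilde{\PP}^x$ is the same as under $\PP^x$," and then says "Accordingly, we obtain the following description." Your writeup just makes this explicit: identify the L\'evy system under $\tPP^x$ with $\nu(z-X_s)\,dz\,ds$ and run the standard compensation-formula derivation of the Ikeda--Watanabe formula with $\tp_D$ in place of $p_D$ via Hunt's formula \eqref{eq:Hunt}. You also correctly flag the one genuine gap that the paper glosses over: the pointwise limit $\tp(t,x,y)/t\to\nu(y-x)$ alone does not formally establish the L\'evy system, and one must upgrade it (vague convergence on compacts away from the diagonal, or a martingale-problem argument) to justify the compensation formula under $\tPP^x$. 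Your two suggested remedies are both reasonable and consistent with how this is handled in the cited references \cite{MR2892584, MR2876511}.
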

We define the Poisson kernel of $D$ for $\tilde{\mathcal{L}}$,
\begin{equation}\label{eq:djpt}
\tilde{P}_D(x,y)=\int_D \tG(x,z)\nu(|y-z|)\,dz\,,\quad x\in D\,,\;y\in D^c\,.
\end{equation}
By (\ref{eq:deftG}), (\ref{eq:djpt}) and (\ref{eq:IWft})  we have 
\begin{equation}\label{eq:IWt}
\tPP^x(X_{\tau_D} \in A) =  \int_A\int_D \tG(x,z)\nu(|y-z|)\,dz\,dy =\int_A \tP_D(x,y)dy\,,
\end{equation}
if $A \subset (\bar{D})^c$. For the case of $A\subset \partial D$, we refer the reader to Lemma~\ref{l:nu}.

\begin{lem}\label{lem:GLDupper}
$\tG(x,y)$ is continuous and 
$$
\tG(x,y)\leq \CXVII, \qquad x,y\in \mathbb{R},
$$
where $\CXVII = \CXVII(\sigma, b, \diam(D))$.
\end{lem}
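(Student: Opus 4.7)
The plan is to bound $\tG(x,y) = \int_0^\infty \tp_D(t,x,y)\,dt$ by splitting the time integral at some $T_0 > 0$, controlling the short-time piece via the heat-kernel comparison \eqref{ptxy_comp}, and iterating on the long intervals $[nT_0,(n+1)T_0]$ via Chapman--Kolmogorov. Continuity will follow from the same majorants via dominated convergence.

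For the short-time piece, $\tp_D \le \tp$ together with \eqref{ptxy_comp} and the unimodality of $p_t$ gives, for $0 < t \le T_0$,
\[
\tp_D(t,x,y) \le c_{T_0}\, p(t,x,y) \le c_{T_0}\, p_t(0) \le c\,/V^{-1}(\sqrt{t}).
\]
The assumption $\la_1 > 1$ combined with \eqref{scalV2} yields $V(r) \le c r^{\la_1/2}$ for $r<1$, hence $1/V^{-1}(\sqrt{t}) \lesssim t^{-1/\la_1}$ near zero, which is integrable on $[0, T_0]$. This produces the uniform bound
\[
M_0 := \sup_{x,y \in \R} \int_0^{T_0} \tp_D(t,x,y)\,dt < \infty.
\]

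For the long-time piece I would iterate. By Chapman--Kolmogorov, for each $n \ge 0$,
\[
\int_{nT_0}^{(n+1)T_0} \tp_D(t,x,y)\,dt = \int_D \tp_D(nT_0, x, z) \Big(\int_0^{T_0} \tp_D(s, z, y)\,ds\Big)\, dz \le M_0\, \tPP^x(\tau_D > nT_0),
\]
and iterating the Markov property $n$ times gives $\tPP^x(\tau_D > nT_0) \le q^n$ with $q := \sup_{x\in\R} \tPP^x(\tau_D > T_0)$. Summing the geometric series bounds $\tG(x,y) \le M_0/(1-q)$.

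The main obstacle is showing $q<1$. For this I would use the observation recorded just before Lemma~\ref{lem:lsgp} that under $\tPP^x$ the jumps of the canonical process still have intensity $\nu$. Any jump of magnitude exceeding $\diam(D)$ forces the process out of $D$, and the number of such jumps in $[0,T_0]$ is Poisson with rate $\nu(\{|z| > \diam(D)\}) > 0$ (positive by \eqref{nu_L}), so $q \le \exp(-T_0\, \nu(\{|z| > \diam(D)\})) < 1$, and all constants depend only on $\sigma$, $b$ and $\diam(D)$ as claimed. Finally, joint continuity of $\tp_D$ on $(0,\infty) \times D \times D$ together with the uniform integrable majorant built above produces continuity of $\tG$ on $D \times D$ by dominated convergence; on the complement $\tG$ vanishes by the remark following \eqref{eq:Hunt}.
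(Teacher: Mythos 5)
Your proof is correct, and the overall skeleton is the same as the paper's: split the time integral into a short-time piece controlled by the free heat kernel comparison \eqref{ptxy_comp} and unimodality, and a long-time piece controlled by iterated exit probabilities. The one genuine difference is that the paper dispenses with the long-time part in one line by citing \cite[Lemma~7]{MR2892584}, which provides $\tp_D(t,x,y)\le Ce^{-ct}$ for $t>1$ directly, whereas you re-derive this exponential decay from scratch via Chapman--Kolmogorov and the uniform bound $q=\sup_x\tPP^x(\tau_D>T_0)<1$. Your derivation of $q<1$ rests on the assertion that, under $\tPP^x$, the jumps of the canonical process form a Poisson random measure with intensity $\nu(dz)\,dt$; the paper only records the weaker L\'evy-system-type statement that the jump \emph{intensity} is $\nu$ (see the sentence preceding Lemma~\ref{lem:lsgp}), and passing from that to a literal Poisson count requires identifying $X$ under $\tPP^x$ as the solution of an SDE driven by the original L\'evy process, which is not spelled out in the paper. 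A more economical route staying entirely inside the paper's toolbox is to use the Ikeda--Watanabe density \eqref{eq:IWft} to write
\begin{align*}
1-q=\tPP^x(\tau_D\le T_0)\ \ge\ \int_0^{T_0}\!\!\int_D\tp_D(u,x,y)\,\nu(\{|z-y|>\diam D\})\,dy\,du\ =\ \nu_R\int_0^{T_0}\tPP^x(\tau_D>u)\,du\ \ge\ \nu_R T_0\,q,
\end{align*}
with $\nu_R:=\nu(\{|z|>\diam D\})>0$ by \eqref{nu_L}, giving $q\le(1+\nu_R T_0)^{-1}<1$ with constants depending only on $\sigma$ and $\diam D$. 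One small thing you did that the paper leaves implicit: your remark that $\int_0^{T_0} p_t(0)\,dt<\infty$ because $\la_1>1$ gives $1/V^{-1}(\sqrt t)\lesssim t^{-1/\la_1}$ near zero is exactly the correct justification --- the global scaling index $\la$ alone need not exceed $1$, so the assumption $\la_1>1$ is genuinely being used here, and it is good that you made this explicit.
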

\begin{proof}
In the same way as in \cite[Lemma 7]{MR2892584} we get that there are constants $c$ and $C$ such that
\begin{equation}\label{eq:eotp}
\tp_D(t,x,y) \le Ce^{-ct}\,,\quad t>1, \quad x,y \in
  \RR\,.
\end{equation}

By (\ref{eq:deftG}),   (\ref{ptxy_comp}) and (\ref{eq:eotp}) we obtain
  \begin{align*}
    \tG(x,y)
    & \le \int_0^1 \CXIV p(t,x,y)\,dt + \int_1^\infty Ce^{-ct}\, dt \\
    & \le \int_0^1 p(t,0,0)\,dt + \int_1^\infty Ce^{-ct}\, dt \\
    & \le c_1 + C/c,
  \end{align*}
where $c_1$ is finite bound for $\int_0^1p(t,0,0)\,dt$. We put $\CXVII = c_1+C/c$. By (\ref{eq:deftG}), continuity of $\tilde{p}_D$ and the dominated convergence theorem, $\tG(x,y)$ is continuous. 
\end{proof}

By Lemmas \ref{lem:GLDupper} and \ref{GDUnif}, for every $x\in D$, the  function  
$$
f_x(y) := \tG(x,y)-G(x,y)-\int_D\tG(x,z)b(z)\partial_z G(z,y)dz
$$ 
is well defined, integrable and bounded on $\mathbb{R}$. 
Hence, following \cite[Theorem 3.1]{2017-TG-TJ-GZ-pms}, we obtain the following perturbation formula (for the proof see \cite{2017-TG-TJ-GZ-pms}).
\begin{lem}\label{lem:pf}
Let $x,y\in\mathbb{R}$. We have
\begin{equation}\label{eq:wp}
\tG(x,y) = G(x,y)+\int_D\tG(x,z)b(z)\partial_zG(z,y)\,dz.
\end{equation}
\end{lem}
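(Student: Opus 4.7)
My plan is to prove the identity by integrating in time a Duhamel-type formula for the killed densities. The recursive definition $\tp = \sum_{n \ge 0} p_n$ immediately yields the free Duhamel identity
\begin{equation*}
\tp(t,x,y) = p(t,x,y) + \int_0^t \int_\R \tp(t-s,x,z)\, b(z)\, \partial_z p(s,z,y)\,dz\,ds,
\end{equation*}
obtained by summing $p_n$ for $n\ge 1$ and recognising $\sum_{n\ge 0} p_n = \tp$ inside the integrand. The first step is to lift this identity to the killed densities, namely
\begin{equation*}
\tp_D(t,x,y) = p_D(t,x,y) + \int_0^t \int_D \tp_D(t-s,x,z)\, b(z)\, \partial_z p_D(s,z,y)\,dz\,ds.
\end{equation*}
I would carry this out by defining a killed perturbation series $\tp_D = \sum p_{D,n}$ (replacing $p,\tp$ by $p_D,\tp_D$ in the recursion) and using the Hunt formula \eqref{eq:Hunt} together with the strong Markov property under $\PP^x$ and $\tPP^x$, matching the boundary terms via the observation after Lemma \ref{lem:lsgp} that the jump intensities under both measures coincide.

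Once the kernel-level identity is in place, I integrate both sides over $t \in (0,\infty)$. The left-hand side becomes $\tG(x,y) - G(x,y)$ by definition. On the right-hand side, Fubini allows interchange of integrations, and after the substitution $u = t-s$ the two time variables decouple, giving
\begin{equation*}
\int_0^\infty \tp_D(u,x,z)\,du = \tG(x,z), \qquad \int_0^\infty \partial_z p_D(s,z,y)\,ds = \partial_z G(z,y).
\end{equation*}
The second equality, which interchanges differentiation and time-integration, is justified exactly as in the dominated convergence argument behind Lemma \ref{GradGreen}. Together these yield
\begin{equation*}
\tG(x,y) - G(x,y) = \int_D \tG(x,z)\, b(z)\, \partial_z G(z,y)\,dz,
\end{equation*}
which is the claim.

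To justify Fubini I combine three ingredients already at hand: the boundedness $\tG(x,\cdot) \le \CXVII$ (Lemma \ref{lem:GLDupper}), the uniform integrability of $|\partial_z G(z,y)|$ against $|b(z)|\,dz$ (Lemma \ref{GDUnif}), and $b \in \pK_1$. These give
\begin{equation*}
\int_D \tG(x,z)\,|b(z)|\,|\partial_z G(z,y)|\,dz \le \CXVII \int_D |b(z)|\,|\partial_z G(z,y)|\,dz < \infty,
\end{equation*}
which is exactly the absolute integrability required, and which incidentally also provides the well-definedness of $f_x(y)$ assumed in the statement.

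The main obstacle is the passage from the free Duhamel identity to its domain analogue. Hunt's formula contributes a boundary term $\tEE^x[\tau_D<t;\,\tp(t-\tau_D, X_{\tau_D}, y)]$ which does not telescope directly with the free Duhamel identity. The cleanest route I see is to work with the perturbation series for $\tp_D$ term by term, verify absolute convergence on bounded time intervals using the comparison \eqref{ptxy_comp} between $\tp$ and $p$ and the exponential decay \eqref{eq:eotp} for large $t$, and assemble the killed Duhamel identity inductively before integrating in time.
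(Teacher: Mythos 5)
The paper itself does not prove this lemma: it records that $f_x$ is well defined, bounded and integrable (via Lemmas~\ref{lem:GLDupper} and~\ref{GDUnif}) and then cites \cite[Theorem 3.1]{2017-TG-TJ-GZ-pms} for the actual proof, so your Duhamel-and-integrate-in-time plan is in the same spirit as the argument the paper relies on. However, as written your proposal leaves two genuine gaps. The killed Duhamel identity
\[
\tp_D(t,x,y) = p_D(t,x,y) + \int_0^t \int_D \tp_D(t-s,x,z)\, b(z)\, \partial_z p_D(s,z,y)\,dz\,ds
\]
is the real content here, and you flag it yourself as ``the main obstacle,'' but you only propose a route (define $p_{D,n}$ by the killed recursion, show $\tp_D=\sum p_{D,n}$). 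That equality is itself non-trivial: $\tp_D$ is defined by the Hunt formula~\eqref{eq:Hunt}, not as a series, and identifying the two objects requires verifying Chapman--Kolmogorov for the series, matching L\'evy systems, and a uniqueness argument for the killed semigroup. Your sketch stops exactly where the proof would have to begin.

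The second gap is in the Fubini step. After the substitution $u=t-s$, absolute integrability of the triple integral requires control of $\int_D \tG(x,z)\,|b(z)|\bigl(\int_0^\infty |\partial_z p_D(s,z,y)|\,ds\bigr)\,dz$, whereas what you bound is $\int_D \tG(x,z)\,|b(z)|\,|\partial_z G(z,y)|\,dz$, i.e.\ $\bigl|\int_0^\infty \partial_z p_D(s,z,y)\,ds\bigr|$ rather than $\int_0^\infty|\partial_z p_D(s,z,y)|\,ds$. There is no a priori reason these are comparable: the improved bound on $|\partial_z G(z,y)|$ near $y$ in Theorem~\ref{thm:GradEst} is derived from the identity $\partial_x G(x,y)=\EE^y\partial_x K(x-X_{\tau_D})-\partial_x K(x-y)$ at the level of the Green function, and this structure is not available slice-by-slice in $s$; you would need a separate estimate of $\int_0^\infty|\partial_z p_D(s,z,y)|\,ds$ suitable for integration against $|b(z)|\,dz$. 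Relatedly, invoking Lemma~\ref{GradGreen} to justify $\int_0^\infty\partial_z p_D(s,z,y)\,ds=\partial_z G(z,y)$ is a mismatch: that lemma interchanges $\partial_y$ with the spatial integral $\int_D G(y,z)f(z)\,dz$, not a spatial derivative with a time integral, so a different dominated-convergence justification is needed there as well.
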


\section[Local results] {Proof of Theorem \ref{Theorem1}}
\label{chap:Green}

First, we will  prove the comparability of $G$ and $\tG$ for small sets $D$ from the $C^{1,1}$ class. 
For this purpose we could consider the perturbed series for $\tG$ as it was presented in \cite{2017-TG-TJ-GZ-pms}. We could define by induction the functions $\kk_n$ and show the convergence and estimates of the series 
$$
\tG(x,y) = \sum\limits_{n=0}^{\infty}\kk_n(x,y).
$$
However, since $\tilde{G}$ is bounded, we present a simpler proof of the following lemma (compare \cite[Lemma 3.11]{2017-TG-TJ-GZ-pms}).

\begin{lem}\label{Theorem1s} 
Let $b\in \pK_1$ and $\lambda>0$. There is $\varepsilon=\varepsilon(\underline{\sigma},b,\lambda)>0$ such that if 
${\rm diam}(D)/r_0(D)\le \lambda$ and $\diam(D)\le\varepsilon$, then
  \begin{equation}
    \label{eq:egfs}
\frac{1}{2}G(x,y) \le \tG(x,y) \le \frac{3}{2} G(x,y), \qquad x,y \in
\R\,.
  \end{equation}
\end{lem}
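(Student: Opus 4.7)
The plan is to iterate the perturbation formula of Lemma~\ref{lem:pf}. Define the integral operator
\[
T f(x,y)=\int_D f(x,z)\,b(z)\,\partial_z G(z,y)\,dz,
\]
so that $\tG = G + T\tG$. Iterating $n$ times one obtains $\tG = \sum_{k=0}^{n-1} T^k G + T^n \tG$. The strategy is to bound the main terms $T^k G$ by $\CXIII^k G$ (making them a convergent geometric series) while showing the remainder $T^n\tG$ vanishes uniformly, so passing to the limit identifies $\tG$ with a small perturbation of $G$.

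The first step is immediate from Lemma~\ref{lem:boundKappa}: by \eqref{def:kappa} one has $|TG(x,y)|\le \kappa(x,y)\,G(x,y)\le \CXIII\,G(x,y)$, and the induction step
\[
|T^k G(x,y)| \le \int_D |T^{k-1}G(x,z)|\,|b(z)\partial_z G(z,y)|\,dz \le \CXIII^{k-1}\int_D G(x,z)|b(z)\partial_z G(z,y)|\,dz \le \CXIII^k G(x,y)
\]
gives $|T^k G|\le \CXIII^k G$ for all $k\ge 0$. I choose $\varepsilon$ so small that $\CXIII\le 1/3$, which is possible by the last part of Lemma~\ref{lem:boundKappa}.

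For the remainder I use the a priori bound $\tG\le \CXVII$ from Lemma~\ref{lem:GLDupper} together with smallness of
\[
\gamma\;:=\;\sup_{y\in D}\int_D |b(z)\,\partial_z G(z,y)|\,dz.
\]
Corollary~\ref{cor:estGradGreena} gives $|\partial_z G(z,y)|\le \CVIII M(\dez\wedge|z-y|)\le \CVIII(M(\dez)+M(|z-y|))$ by monotonicity of $M$, and the argument from the proof of Lemma~\ref{GDUnif} controls both parts by a constant multiple (depending only on $\lambda$ through the number of intervals of $D$) of $K_{\mathrm{diam}(D)}$, which tends to $0$ thanks to the Kato condition \eqref{eq:Kc}. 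Shrinking $\varepsilon$ further ensures $\gamma<1$, and then by induction $\|T^n\tG\|_\infty\le \CXVII\gamma^n\to 0$.

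Letting $n\to\infty$ in the iterated formula yields $\tG(x,y)=\sum_{k\ge 0} T^kG(x,y)$, so
\[
|\tG(x,y)-G(x,y)|\;\le\;\sum_{k\ge 1}\CXIII^k G(x,y)\;=\;\frac{\CXIII}{1-\CXIII}\,G(x,y)\;\le\;\tfrac12\,G(x,y),
\]
which is \eqref{eq:egfs}. The most delicate step is the vanishing of the remainder $T^n\tG$: this is exactly where the ``simpler iteration'' replaces the perturbation-series approach of \cite{2017-TG-TJ-GZ-pms}; the $L^\infty$ bound on $\tG$ (available thanks to $\la_1>1$, which makes $G$ itself bounded) together with the Kato smallness of $\gamma$ provides the required contraction of the remainder without a full term-by-term series construction.
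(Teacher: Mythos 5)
Your proof is correct and takes essentially the same approach as the paper: both iterate the perturbation formula of Lemma~\ref{lem:pf}, use Lemma~\ref{lem:boundKappa} to make the geometric factor $\CXIII\le 1/3$, and use the $L^\infty$ bound $\tG\le\CXVII$ from Lemma~\ref{lem:GLDupper} together with the Kato smallness of $\sup_y\int_D|b\,\partial_z G(\cdot,y)|$ (via Lemma~\ref{GDUnif}) to kill the remainder. Your packaging as a Neumann series $\tG=\sum_k T^kG$ with a vanishing tail $T^n\tG$ is a minor tidying of the paper's iteration and has the small advantage of delivering the two-sided bound in one stroke via $|\tG-G|\le\frac{\CXIII}{1-\CXIII}G$, whereas the paper first derives the upper bound and then feeds it back into the perturbation formula to get the lower one.
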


\begin{proof}
By  Lemma \ref{lem:boundKappa}, there exists $\varepsilon_1 > 0$ such that if $\diam(D) < \varepsilon_1$, then

\begin{align}\label{eq1:Theorem1s}
	\int_DG(x,z)|\partial_zG(z,y)b(z)|dz\leq \CXIII G(x,y),
\end{align}
and $\CXIII < \frac{1}{3}$. Let $0 < \eta < 1$. By Lemma \ref{GDUnif}, there exists $\varepsilon_2 > 0$ such that if $\diam(D) < \varepsilon_2$, then
$$
\sup_{y \in \RR} \int_D|\partial_zG(z,y)b(z)|dz \leq \eta.
$$
We put $\varepsilon = \min(\varepsilon_1, \varepsilon_2)$ and $\diam(D)\leq\varepsilon$. 
By Lemma \ref{lem:pf},
\begin{align}
\tG(x,y) &\leq  G(x,y) + \int_D \tG(x,z)|b(z)\partial_zG(z,y)|dz \label{ali1} \\ 
&\leq  G(x,y) + \CXVII\eta. \label{ali2} 
\end{align}
By putting the estimates of $\tG$ from \eqref{ali2} into \eqref{ali1} and applying \eqref{eq1:Theorem1s}, we get
$$
\tG(x,y) \leq  G(x,y) + \int_D (G(x,y) + \CXVII\eta)  |b(z)\partial_zG(z,y)|dz \le G(x,y)(1 + \CXIII) + \CXVII\eta^2.
$$
By induction,
\begin{equation}\label{Gseries}
\tG(x,y) \leq G(x,y)(1 + \CXIII + \dots + \CXIII^{n-1}) + \CXVII\eta^n.
\end{equation}
Now, taking $n\rightarrow\infty$, for every $x,y \in D$, we obtain 
\begin{align}\label{eq2:Theorem1s}
	\tG(x,y) \leq G(x,y)\frac{1}{1-\CXIII}.
\end{align}
Since $\CXIII < \frac{1}{3}$, by Lemma \ref{lem:pf}, \eqref{eq2:Theorem1s} and \eqref{eq1:Theorem1s}, we get
$$
\tG(x,y)\geq G(x,y) - \frac{1}{1-\CXIII} \int_D G(x,z)|b(z)\partial_zG(z,y)|dz \ge G(x,y)\left(1 - \frac{\CXIII}{1-\CXIII}\right).
$$
\end{proof}

We note that the comparison constants in the proof above will improve to $1$ if  ${\rm diam}(D)\to 0$ and the distortion of $D$ is bounded.
By (\ref{eq:IWt}),
\begin{equation}
  \tPP^x(X_{\tau_D} \in A) \stackrel{\CXVI}{\approx} \PP^x(X_{\tau_D} \in A)\,,\quad x\in D\,,\quad A\subset (\overline{D})^c\, , \label{I-WComp1}
\end{equation}
where $\CXVI = \CXVI(\underline{\sigma}, b, \lambda, \diam(D))$ and $\diam(D) < \varepsilon$ from Lemma \ref{Theorem1s}.

Following \cite[Proof of Lemma 14]{MR2892584}, we obtain that the boundary of our general $C^{1,1}$ open set $D$ is not hit at the first exit, i.e.
\begin{align}\label{l:nu}
	\tPP^x(X_{\tau_D}\in \partial D)=0, \qquad x \in D.
\end{align}
Hence, in the context of Lemma~\ref{Theorem1s}, 
the $\tPP^x$ distribution of $X_{\tau_D}$ is absolutely continuous with respect to the Lebesgue measure,
and has density function 
\begin{equation}\label{PoissonComp}
  \tilde P_D(x,y) \approx P_D(x,y)\,,\quad \;y\in D^c\,,
\end{equation}
provided $x\in D$.
This follows from (\ref{eq:IWt}) and \eqref{l:nu}. 

The definition of $\tilde{\mathcal{L}}$-harmonicity is analogous to that of $\mathcal{L}$-harmonicity
\begin{definition}
We say that a function $f:\R\rightarrow\R$ is $\tilde{\mathcal L}-$harmonic on an open bounded set $D\subset \R$, if for any open $F \subset \overline{F} \subset D$ and $x\in F$ 
$$
f(x) = \tilde\E^xf(X_{\tau_F}).
$$
We say that a function $f$ is regular $\tilde{\mathcal L}-$harmonic on an open bounded set $D\subset \R$,  if for every $x \in D$
$$
f(x) = \tilde\E^xf(X_{\tau_D}).
$$
\end{definition}

Following \cite{MR2892584} and \cite{2017-TG-TJ-GZ-pms}, we get the following Harnack inequality.

\begin{lem}[Harnack inequality for $\tL$]\label{HIforL} 
  Let $x,y \in \R$, $0<s<1$ and $k\in \mathbb{N}$ satisfy $|x-y|
  \le 2^ks$. Let ${u}$ be nonnegative in $\R$ and
  $\tilde{\mathcal{L}}$-harmonic in $B(x,s) \cup B(y,s)$.  There is $\CXVIII = \CXVIII(\ua, \uC, b)$ such that
  \begin{equation}\label{LHarnackIneq}
    \CXVIII^{-1}2^{-k(1+\ua)}{u}(x) \le {u}(y) \le \CXVIII 2^{k(1+\ua)}{u}(x)\,.
  \end{equation}
\end{lem}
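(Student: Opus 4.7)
The plan is to follow the standard scheme of reducing a Harnack inequality for $\tilde{\mathcal{L}}$-harmonic functions to one for $\mathcal{L}$-harmonic functions via the comparability of Poisson kernels, and then to exploit the explicit Poisson kernel estimates proved earlier in the paper together with the scaling properties of $V$. Fix $\varepsilon>0$ as in Lemma~\ref{Theorem1s}, and consider first the regime where $s\le \varepsilon/4$. In that regime every ball $B=B(x,s)$ is an interval of distortion $1$ and diameter $2s<\varepsilon$, so Lemma~\ref{Theorem1s} and the identity \eqref{PoissonComp} yield
\[
\tilde{P}_{B}(w,z)\stackrel{c}{\approx}P_{B}(w,z),\qquad w\in B,\; z\in B^{c},
\]
with $c=c(\underline{\sigma},b)$. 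In particular, using \eqref{eq:IWt} and \eqref{l:nu}, I may represent a nonnegative $\tilde{\mathcal{L}}$-harmonic function $u$ in $B(x,s)\cup B(y,s)$ as
\[
u(x)\stackrel{c}{\approx}\int_{B(x,s)^{c}}u(z)\,P_{B(x,s)}(x,z)\,dz,\qquad u(y)\stackrel{c}{\approx}\int_{B(y,s)^{c}}u(z)\,P_{B(y,s)}(y,z)\,dz.
\]

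The second step is to compare these two integrals. Since $x$ is the centre of $B(x,s)$, Lemma~\ref{est:poissBall} gives $P_{B(x,s)}(x,z)\approx V(s)\bigl(V(\de_{z}^{B(x,s)})|x-z|\bigr)^{-1}\bigl(V(s)/V(\de_{z}^{B(x,s)})\wedge 1\bigr)$, and analogously for $P_{B(y,s)}(y,\cdot)$. For every $z\in B(x,s)^{c}\cap B(y,s)^{c}$ with $|x-z|\ge |x-y|$, the estimates together with the weak upper scaling of $V$ with exponent $\bar\alpha/2$ and the elementary bound $|y-z|\le |x-z|+|x-y|\le 2|x-z|$ give
\[
P_{B(y,s)}(y,z)\le C\,2^{k(1+\bar\alpha)}P_{B(x,s)}(x,z).
\]
The gain $2^{k(1+\bar\alpha)}$ is read off as: a factor $2^{k\bar\alpha}$ from the ratio $V(|y-z|)^{2}/V(|x-z|)^{2}$ via WUSC and a factor $2^{k}$ from $|x-z|/|y-z|$. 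For the leftover pieces $z\in B(x,s)\setminus B(y,s)$ and $z\in B(y,s)\setminus B(x,s)$, I fall back on harmonicity: on the first set $u(z)=\tilde{\mathbb{E}}^{z}u(X_{\tau_{B(x,s)}})$, which brings the integration back into $B(x,s)^{c}$; on the second set I use the reverse identity at $y$, obtaining an additional iteration of the Poisson kernel whose kernel-kernel composition can be controlled by the same $V$-scaling bound.

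Combining the two representations of $u(x)$ and $u(y)$ with the pointwise comparison of Poisson kernels yields $u(y)\le C\,2^{k(1+\bar\alpha)}u(x)$, and the converse inequality follows by interchanging the roles of $x$ and $y$. This establishes the Harnack inequality in the small-scale regime $s\le\varepsilon/4$.

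To handle the remaining range $\varepsilon/4<s<1$, I cover $B(x,s)$ and $B(y,s)$ by a bounded number (depending only on $\varepsilon$) of sub-balls of radius $\varepsilon/4$, chain them together by the small-scale Harnack, and absorb the resulting finite multiplicative factor into $\CXVIII$. The constant is then of the form $\CXVIII=\CXVIII(\bar\alpha,\bar C,b)$ as asserted, since $\varepsilon$ depends only on $\underline{\sigma}$ and $b$. The main obstacle is the pointwise Poisson kernel comparison in Step two for $z$ lying in the narrow annular region where $|y-z|$ is much smaller than $|x-z|$ (or vice versa); there one must use the full strength of the two-sided estimate of Lemma~\ref{est:poissBall}, including the $V(\de_{z})^{-1}$ factor, and the WLSC of $V$ at small arguments to prevent the ratio from blowing up near the boundary of either ball.
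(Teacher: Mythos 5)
The paper does not give its own proof of this lemma---it simply cites \cite{MR2892584} and \cite{2017-TG-TJ-GZ-pms}---so I can only judge the internal soundness of your argument. It has a genuine gap. The central step, the pointwise comparison
$P_{B(y,s)}(y,z)\le C\,2^{k(1+\ua)}P_{B(x,s)}(x,z)$ on $B(x,s)^{c}\cap B(y,s)^{c}$, is false near $\partial B(y,s)$ when the balls are disjoint. Take $|x-y|$ of order $2^{k}s$ with $k\ge 2$, and let $z$ lie on the far side of $y$ from $x$ with $|y-z|-s=\epsilon$ small. By Lemma~\ref{est:poissBall}, $P_{B(y,s)}(y,z)\approx V(\epsilon)^{-1}\to\infty$ as $\epsilon\to 0$, while $P_{B(x,s)}(x,z)\approx V(s)^{2}\nu(|x-z|)$ stays bounded since $|x-z|\approx 2^{k}s$. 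The ratio therefore diverges. The WLSC of $V$ merely quantifies how fast $V(\epsilon)\to 0$; it cannot remove the singularity of the Poisson kernel at the boundary, so the remark at the end of your proof that WLSC will ``prevent the ratio from blowing up'' is not correct. Moreover, the ``leftover pieces'' you single out ($z\in B(x,s)\setminus B(y,s)$, etc.) are not where the difficulty lies; the difficulty is in the common exterior, in the thin annulus just outside $\partial B(y,s)$ (and symmetrically $\partial B(x,s)$), precisely where you try to run the pointwise comparison.

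The way to repair this is not to compare $P_{B(y,s)}(y,\cdot)$ with $P_{B(x,s)}(x,\cdot)$ over the whole exterior, but to separate the two roles. First establish a one-ball Harnack inequality for $\tilde{\mathcal L}$ from the comparability $\tilde P_{B}\approx P_{B}$ and the fact that $P_{B(w,\rho)}(w_1,z)\approx P_{B(w,\rho)}(w_2,z)$ uniformly in $z\in B(w,\rho)^{c}$ when $w_1,w_2\in B(w,\rho/2)$---here $\delta_z$ is the same for both points, so no boundary singularity ever enters the ratio. Then bridge $x$ and $y$ by a single long jump: write
$u(x)=\int u(z)\,\tilde P_{B(x,s/2)}(x,z)\,dz\ge\int_{B(y,s/4)}u(z)\,\tilde P_{B(x,s/2)}(x,z)\,dz$,
use the one-ball Harnack on $B(y,s/2)$ to replace $u(z)$ by a multiple of $u(y)$ for $z\in B(y,s/4)$, and bound $\tilde P_{B(x,s/2)}(x,z)$ from below by the far-field form $\approx V(s)^{2}\nu(|x-z|)$ (Ikeda--Watanabe), which for $|x-z|\le 2^{k+1}s$ yields exactly the factor $2^{-k(1+\ua)}$ after integrating over $B(y,s/4)$. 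This two-step scheme avoids the singular region altogether and is the argument the paper implicitly relies on via its citations. The chaining you propose for $\varepsilon/4<s<1$ is also problematic as stated, because $u$ is only assumed harmonic in $B(x,s)\cup B(y,s)$, not in balls straddling the gap between them; the long-jump step above is what replaces chaining for disjoint balls.
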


We obtain a boundary Harnack principle for $\mathcal{L}$ and general $C^{1,1}$ sets $D$. See proof of \cite[Lemma 4.3]{2017-TG-TJ-GZ-pms}

\begin{lem}[BHP]\label{BHPforL} 
  Let $z \in \partial{D}$, $0<r\le  r_0(D)$, and $0<q<1$. If
$\tilde{u}, \tilde{v}$ are nonnegative in $\R$, regular $\tilde{\mathcal{L}}$-harmonic
  in $D \cap B(z,r)$, vanish on $D^c \cap B(z,r)$
  and satisfy $\tilde{u}(x_0)=\tilde{v}(x_0)$ for some $x_0 \in D \cap B(z,qr)$
  then
  \begin{equation}
    \label{BHPforLEq}
    \CXIX^{-1}\tilde{v}(x) \le \tilde{u}(x) \le \CXIX \tilde{v}(x)\,,\quad x \in D \cap B(z,qr)\,,
  \end{equation}
with $\CXIX = \CXIX(\sigma,b,q,r_0(D))$.
\end{lem}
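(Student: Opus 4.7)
My plan is to mimic \cite[Lemma 4.3]{2017-TG-TJ-GZ-pms}, relying on three earlier ingredients: the Poisson kernel comparison $\tilde{P}_U\approx P_U$ on small sets (Lemma~\ref{Theorem1s} together with \eqref{PoissonComp}), the explicit Poisson kernel estimate of Proposition~\ref{PforD}, and the $\tilde{\mathcal{L}}$-Harnack inequality (Lemma~\ref{HIforL}) for chaining. Put $U=D\cap B(z,r)$. Since $r\le r_0(D)$ and the intervals comprising $D$ are pairwise separated by at least $r_0(D)$, $U$ is a single interval having $z$ as one endpoint, so $\diam(U)/r_0(U)=1$. The regular $\tilde{\mathcal{L}}$-harmonicity of $\tilde u$ on $U$, the vanishing of $\tilde u$ on $D^c\cap B(z,r)$, and \eqref{l:nu}--\eqref{eq:IWt} together give
$$\tilde u(x)=\int_{|y-z|\ge r}\tilde P_U(x,y)\,\tilde u(y)\,dy,\qquad x\in U,$$
and analogously for $\tilde v$.

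I would first treat the small-scale case $r\le\varepsilon(\underline{\sigma},b,1)$, with $\varepsilon$ from Lemma~\ref{Theorem1s}. Then \eqref{PoissonComp} gives $\tilde P_U\approx P_U$, and applying Proposition~\ref{PforD} to the single interval $U$ produces, for $x\in U\cap B(z,qr)$ and $|y-z|\ge r$, the factorization
$$P_U(x,y)\stackrel{c(q)}{\approx} V(\delta_x^U)\,h(y),\qquad h(y):=\frac{1}{V(\delta_y^U)|y-z|}\Bigl(\tfrac{V(\diam U)}{V(\delta_y^U)}\wedge 1\Bigr),$$
because $|x-z|\le qr\le q|y-z|$ forces $|x-y|\approx|y-z|$, so the $x$-dependence of $P_U$ collapses into $V(\delta_x^U)$. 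Substituting yields $\tilde u(x)\approx V(\delta_x^U)\int h(y)\tilde u(y)\,dy$ and the analogous identity for $\tilde v$; the condition $\tilde u(x_0)=\tilde v(x_0)$ then pins the ratio $\int h\tilde u/\int h\tilde v$ up to a multiplicative constant, yielding \eqref{BHPforLEq} in this regime.

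For general $r\le r_0(D)$ I would cover $D\cap B(z,qr)$ by a finite linear chain of balls of radius $\varepsilon/4$ (at most $O(r_0(D)/\varepsilon)$ of them), apply the small-scale comparison inside each, and transfer the multiplicative constants across consecutive balls via Lemma~\ref{HIforL}. This produces $\CXIX=\CXIX(\sigma,b,q,r_0(D))$ as claimed. I expect the main obstacle to be the factorization step in the small-scale case: one must verify that the $x$-dependence of $P_U(x,y)$ really is captured by the single factor $V(\delta_x^U)$ uniformly over $y$ with $|y-z|\ge r$, covering both $y\in D^c$ and $y\in D\setminus U$. The clean one-dimensional form of $P_U$ in Proposition~\ref{PforD} reduces this to bookkeeping rather than a deep issue, which is precisely why the higher-dimensional argument of \cite[Lemma 4.3]{2017-TG-TJ-GZ-pms} transfers to our setting.
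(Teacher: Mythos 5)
Your strategy matches the one the paper invokes via \cite[Lemma 4.3]{2017-TG-TJ-GZ-pms}, and the central step --- the factorization $P_U(x,y)\approx V(\delta^U_x)\,h(y)$ obtained from Proposition~\ref{PforD} and the observation $|x-y|\approx|y-z|$ when $x\in B(z,qr)$ and $|y-z|\ge r$ --- is carried out correctly. The comparison $\tilde P_U\approx P_U$ from \eqref{PoissonComp} and Lemma~\ref{Theorem1s} is indeed available because $U=D\cap B(z,r)$ is a single interval of diameter $\le r$ and distortion $1$.

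The only point that needs tightening is the chaining for $\varepsilon<r\le r_0(D)$. The small-scale factorization is a boundary statement: it applies to $U'=D\cap B(z,\varepsilon)$ (on which $\tilde u,\tilde v$ remain regular $\tilde{\mathcal{L}}$-harmonic by the strong Markov property), not to arbitrary balls in a chain, so one cannot literally ``apply the small-scale comparison inside each'' ball. The clean split uses the one-dimensional fact that $D\cap B(z,qr)$ is a single interval with $\delta^D_x=|x-z|$ there: for $|x-z|<\varepsilon/2$ invoke the small-scale BHP on $U'$; for $\varepsilon/2\le|x-z|<qr$ the point $x$ lies at distance $\ge\varepsilon/2$ from $\partial D$, and a chain of \emph{interior} Harnack steps (Lemma~\ref{HIforL}) connects it to a fixed reference point at distance $\approx\varepsilon/4$ from $z$, which the small-scale BHP also controls. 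Only the interior Harnack inequality, not the small-scale BHP, is used along the chain. The number of steps is bounded in terms of $q$, $\varepsilon$ and $r_0(D)$, yielding $\CXIX$ with the stated dependencies (more precisely on $\underline{\sigma}$ rather than $\sigma$, through Lemma~\ref{Theorem1s} and Proposition~\ref{PforD}, consistent with the paper's standing assumptions).
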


Now, we have all the tools necessary to prove the main result of our paper. Since in the proof we follow the idea from \cite{MR2892584}, we only give its basic steps (for details see  \cite[Proof of Theorem 1]{MR2892584}).
\begin{proof}[Proof of Theorem {\ref{Theorem1}}]\label{sec:b}
 By (\ref{eq:wp}), we have the estimate
\begin{equation}\label{eq:0}
  \tG(x,y) \le G(x,y) + \int_{D} |\tG(x,z)\partial_zG(z,y)| |b(z)|\,dz\,,\quad
x,y\in D\,.
\end{equation}
We consider  $\eta<1$, say $\eta=1/2$.
By Lemma~\ref{GDUnif} 
there is a constant $r>0$ so small that
\begin{equation}\label{eq:1}
\int_{D^r} |\partial_zG(z,y)b(z)|\,dz  <\eta\,,\quad
  y \in D\,,
\end{equation}
and
\begin{equation}\label{eq:2}
  \int_{D^r} \frac{G(x,z)|\partial_zG(z,y)|}{G(x,y)} |b(z)|\,dz <\eta\,, \qquad y \in D\,,
\end{equation}
Where $D^r = \lbrace z\in D:\dez\leq r\rbrace$. We denote $$\rho=[\varepsilon \land r_0(D)\land r]/16\,,$$ with $\varepsilon=\varepsilon(\psi,b,\lambda, \diam(D))$ of Lemma~\ref{Theorem1s}. 

To prove (\ref{eq:egf}) we will consider $x$ and $y$ in a partitions of $D\times D$.

\noindent First, we consider $y$ far from the boundary of $D$, say $\dey \ge \rho/4$.
\begin{itemize}
	\item For $|x-y| \le \rho/8$, $G(x,y) \approx G_B(x,y) \approx \tG_B(x,y) \approx \tG_D(x,y)$ (we use Lemmas \ref{GreenEst1}, \ref{Theorem1s}, \ref{lem:GLDupper}).
	\item If $\rho/8<\dex$ we use Harnack inequalities for $\mathcal{L}$ and $\tilde{\mathcal{L}}$.
	\item For $\dex < \rho/8$ we use Boundary Harnack principles (see Lemma~\ref{BHPforL}, \cite[Theorem 2.18]{KSV2014}).
\end{itemize}

\noindent Next, suppose that $\deltaDD(y) \le \rho /4$. Here, the difficulty lies in the fact $\tG$ is non-symmetric.\\
In the proof of lower bounds we consider two cases: $x$ close to $y$ and $x$ far away from $y$.
\begin{itemize}
\item In the case $|x-y| \le \rho$, we locally approximate $D$ by the small $C^{1,1}$ set $F$ such that $\delta_D(x) = \delta_F(x)$ and  $\delta_D(x) = \delta_F(x)$ (see \cite[Lemma 1]{MR2892584}). Then $\tG(x,y) \ge \tG_F(x,y) \approx G_F(x,y) \approx G_D(x,y)$ (see Lemma \ref{GreenEst1}).\item For $|x-y| > \rho$ and $\delta_D(x) \ge \rho/4$ we use Harnack inequalities. For $\delta_D(x) \le \rho/4$ we use boundary Harnack principles.
\end{itemize}

\noindent In the next step, we prove the upper bound in (\ref{eq:egf}) for $\deltaDD(x) \ge \rho/4$.
We have already proved that for $y \in D \setminus D^r$,
$$
c_1^{-1} G(x,y) \le \tG(x,y) \le c_1 G(x,y)\,.
$$
By (\ref{def:kappa}), Lemma~\ref{lem:boundKappa}, Lemma~\ref{lem:GLDupper}, (\ref{eq:0}) and (\ref{eq:1}),
\begin{align}
  \tG(x,y)  & \le A G(x,y) + \int_{D_r} \tG(x,z)|\partial_zG(z,y)b(z)|\,dz\,, \label{eq:toi} \\
  & \le AG(x,y) + B(x)\,, \label{eq:3}
\end{align}
where $A = 1+c_1\CIII$ and $B(x)=\eta \CVII $.
Now, plugging (\ref{eq:3}) into
(\ref{eq:toi}), and using (\ref{eq:1}), (\ref{eq:2}) and induction, we get for $n=0,1,\ldots$,
\begin{equation}\label{eq:AB}
\tG_D(x,y) \le A\big(1 + \eta +\cdots + \eta^n \big)G_D(x,y) +
\eta^n B(x)\,.
\end{equation}
In consequence,
\begin{equation}\label{eq:ubi}
\tG_D(x,y) \le \frac{A}{1- \eta} G_D(x,y)\,.
\end{equation}
Finally, we prove the upper bound in (\ref{eq:egf}) when $\dex< \rho/4$.
\begin{itemize}
\item If $|x-y| > \rho$, we use boundary Harnack principles.
\item For $|x-y| \le \rho$, consider the same set $F$ as above. We have
\end{itemize}

$$
  \tGDD(x,y) =
   \tG_F(x,y) + \int_{D \setminus F} \tP_F(x,z) \tGDD(z,y) \,dz\,.
$$
By Lemma~\ref{Theorem1s} and (\ref{PoissonComp}), $\tG_F(x,y) \approx G_F(x,y)$ and $\tP_F(x,z)\approx P_F(x,z)$. We already know that  for $|z-y| > \rho$, $\tGDD(z,y) \approx G(z,y)$. Thus,
$$
\tGDD(x,y) \approx G_F(x,y) + \int_{D \setminus F} P_F(x,z)
 G_D(z,y) \,dz = G_D(x,y)\,.
$$
The proof of Theorem~\ref{Theorem1} is complete.
\end{proof}

\bibliographystyle{abbrv}

\end{document}